\newtheorem{thm}{\textbf Theorem}[section]
\newtheorem{lem}{\textbf Lemma}[section]
\newtheorem{rem}{\textbf Remark}[section]
\newtheorem{cor}{\textbf Corollary}[section]
\numberwithin{equation}{section}
\newcommand{\be}{\begin{eqnarray}}
\newcommand{\ee}{\end{eqnarray}}
\newcommand{\bes}{\begin{eqnarray*}}
\newcommand{\ees}{\end{eqnarray*}}
\begin{document}
\begin{titlepage}
\title{\bf Existence and large time behavior to
the nematic liquid crystal
 equations in Besov-Morrey spaces}

\author{  Guoquan Qin
\\[10pt]
\small { Graduate School of China Academy of Engineering Physics,}\\
\small {  Beijing,  100088,  P. R. China}\\[5pt]
}
\footnotetext
{

~~~~~E-mail addresses: 690650952@qq.com}


\date{}
\end{titlepage}
\maketitle
\begin{abstract}
In this paper, we establish the uniquely existence of the global
mild solution to the nematic liquid crystal equations
in   Besov-Morrey spaces.
Some self-similarity and large time behavior
of the global mild solution are also investigated.
 \vskip0.1in
\noindent{\bf MSC2010:} 35Q30, 76A15, 35C06, 42B35.

\end{abstract}

~~\noindent{ \textbf{Key words}: Global mild solution;
Besov-Morrey spaces; Nematic liquid crystal flow}



\section{Introduction}
\setcounter{equation}{0}
Liquid crystal describes  a state of matter in which
 the molecules may be oriented  like a crystal.
There are three main types of liquid crystals, namely, nematic, smectic and
cholesteric.
 What of frequent occurrence is the nematic type
 in which the molecules don't present
any positional order but organize in long-range orientational order.

In this paper, we study  the following
incompressible
flow of nematic liquid crystals
in $\mathbb{R}^{3}:$
\begin{equation}\label{nematic}
\left\{\begin{array}{ll}{\partial_{t} u-\Delta u+(u \cdot \nabla) u+\nabla P=-\nabla \cdot(\nabla d \odot \nabla d),} & {(x, t) \in \mathbb{R}^{3} \times(0,+\infty),} \\ {\partial_{t} d+(u \cdot \nabla) d=\Delta d+|\nabla d|^{2} d,} & {(x, t) \in \mathbb{R}^{3} \times(0,+\infty),} \\ {\nabla \cdot u=0,} & {(x, t) \in \mathbb{R}^{3} \times(0,+\infty),} \\ {\left.(u, d)\right|_{t=0}=\left(u_{0}, d_{0}\right),} & {x \in \mathbb{R}^{3},}\end{array}\right.
\end{equation}
where
$u(x, t) : \mathbb{R}^{3} \times(0,+\infty) \rightarrow \mathbb{R}^{3}$
is the unknown velocity field of the flow,
$P(x, t) : \mathbb{R}^{3} \times(0,+\infty) \rightarrow \mathbb{R}$
is a scalar pressure,
$d(x, t) : \mathbb{R}^{3} \times(0,+\infty) \rightarrow \mathbb{S}^{2}$ is the unknown (averaged)
macroscopic/continuum molecule orientation of the nematic liquid crystal flow
, where  $\mathbb{S}^{2}$ is the unit sphere in $\mathbb{R}^{3}$.
$u_{0}$ is a given initial velocity with $\nabla \cdot u_{0}=0$ in distribution sense, and $d_{0} : \mathbb{R}^{3} \rightarrow \mathbb{S}^{2}$
is a given initial liquid crystal orientation field and satisfies
$\lim _{|x| \rightarrow \infty} d_{0}(x)=\underline{d}_{0}$ with the constant unit vector
$\underline{d}_{0} \in \mathbb{S}^{2}.$ The notation $\nabla d \odot \nabla d$ denotes the $3 \times 3$ matrix whose $(i, j)$-th entry is given by $\partial_{i} d \cdot \partial_{j} d(1 \leq i, j \leq 3)$.
Note that we have set
the viscosity constants  to be 1
for simplicity.

System (\ref{nematic}) couples the forced Navier-Stokes equation with the transported flow of harmonic maps to $\mathbb{S}^{2}$.
  It has been simplified.
The original one was formulated by Ericksen and Leslie in 1960s
(see \cite{Ericksen1962, Leslie1979})  and  is one of the
most successful models for the nematic liquid crystals.

Lin \cite{Lin1989} and Lin-Liu \cite{LinLiu1995,LinLiu1996,LinLiu2000}
initiated the rigorous mathematical analysis of (\ref{nematic})
and
 considered the Ginzburg-Landau approximation of it after replacing $|\nabla d|^{2} d$  by  $\frac{1}{\epsilon^{2}}\left(1-|d|^{2}\right)d (\epsilon>0)$.
They proved the existence of global weak solutions and their partial regularities.

In 2D, Lin-Lin-Wang \cite{LinLinWang2010}  established the existence
of a global weak solution that is smooth away from at most finitely many times
for the original system (\ref{nematic})
 (see also Hong \cite{Hong2011}, Hong-Xin \cite{HongXin2012}, Hong-Li-Xin \cite{HongLiXin2014}, Huang-Lin-Wang \cite{HuangLinWang2014}, Li-Lei-Zhang \cite{LeiLiZhang2012},
Wang-Wang \cite{WangWang2014} for relevant results in dimension two, and Liu-Zhang \cite{LiuZhang2000} and Ma-Gong-Li
\cite{MaGongLi2014}).

 In 3D,
   Wen-Ding \cite{WenDing2011}  proved
the uniquely existence of local  strong solutions. Huang-Wang \cite{HuangWang2012} established a blow-up criterion of strong solutions.
 The well-posedness  for initial data
$(u_{0}, d_{0})$ with small $BMO^{-1} \times BMO$-norm and with small $L_{uloc}^{3}(\mathbb{R}^{3})$-norm was verified
 by Wang \cite{Wang2011} and Hineman-Wang \cite{HinemanWang2013}, respectively.
  Under the assumption that the
initial director field $d_{0}(\Omega) \subset \mathbb{S}_{+}^{2}$,
 Lin-Wang \cite{LinWang2016} established
 the existence of global weak solutions.

For the issue of  large time  behavior, Liu-Xu \cite{LiuXu2015} obtained an optimal decay rates for
$\|(u, \nabla d)\|_{H^{m}(\mathbb{R}^{3})}$
provided that
$(u_{0}, d_{0}) \in H^{m}(\mathbb{R}^{3}) \times H^{m+1}(\mathbb{R}^{3}, \mathbb{S}^{2})(m \geq 3)$ has sufficiently small
$\|(u_{0}, \nabla d_{0})\|_{L^{2}(\mathbb{R}^{3})}-\mathrm{norm},$
where the smallness depends on norms of higher order derivatives of
initial data.
Under the assumption that $\|u_{0}\|_{H^{1}(\mathbb{R}^{3})}+\|d-e_{3}\|_{H^{2}(\mathbb{R}^{3})}$ is sufficiently small,
Dai-Qing-Schonbek \cite{DaiQingSchonbek2012}
 and  Dai-Schonbek \cite{DaiSchonbek2014}  established an optimal decay rates in $H^{m}(\mathbb{R}^{3})$.
 Very recently, Huang-Wang-Wen \cite{HuangWangWen2019}
 consider system (\ref{nematic}) in $\mathbb{R}_{+}^{3}$
 and established some  time decay estimates under the condition
 that $(u_{0}, d_{0}) \in L_{\sigma}^{3}(\mathbb{R}_{+}^{3}) \times \dot{W}^{1,3}(\mathbb{R}_{+}^{3}, \mathbb{S}^{2})$  has small $\|(u_{0}, \nabla d_{0})\|_{L^{3}(\mathbb{R}_{+}^{3})}$ norm, which improves the conditions
on the initial data given by \cite{DaiQingSchonbek2012,DaiSchonbek2014, LiuXu2015}.
For more results on the nematic liquid crystal equations, we can refer to
\cite{LinWangPhilos2014,LinLiu1995,LiuQiao2016,LiuQiaoWang2018,XuZhang2012,LiWang2012}.

This paper aims to treat system (\ref{nematic})
in a new setting.
 We consider the framework of Besov-Morrey spaces $\mathbf{N}_{r, \lambda, q}^{-\beta}$
 which contain strongly singular functions and measures supported in either points (Diracs), filaments,
or surfaces (see e.g. [\cite{FerreiraPrecioso2011}, Remark 3.3] for more details).
Besov-Morrey spaces have been studied  in  a large number of literatures and found wide applications in analysis and
partial differential equations; see, e.g., \cite{BieWangYao2015,FerreiraPrecioso2013,ky1,Mazzucato2003,XuTan2013,YangYuan2008,YangYuan2010,YangYuan2013}.

Our motivation of this paper  is due to  Almeida-Precioso \cite{ap}
 and Yang-Fu-Sun \cite{YangFuSun2019}.
Almeida-Precioso \cite{ap}
obtained the global well-posedness
and asymptotic behavior for a semilinear heat-wave
type equation in Besov-Morrey spaces.
Yang-Fu-Sun \cite{YangFuSun2019}
established the existence and large time behavior of global mild solution to
the coupled chemotaxis-fluid equations
in Besov-Morrey spaces.
Their results are closely related to the scaling property of
the corresponding  equations and the indexes of the solution
spaces they obtained are critical.

Recall that
system (\ref{nematic}) also has a scaling property and is invariant under the following transformation
\begin{equation}\label{scaling}
\left(u_{\lambda}(x, t), P_{\lambda}(x, t), d_{\lambda}(x, t)\right) :=\left(\lambda u\left(\lambda x, \lambda^{2} t\right), \lambda^{2} P\left(\lambda x, \lambda^{2} t\right), d\left(\lambda x, \lambda^{2} t\right)\right).
\end{equation}
We say a function space is the initial critical space for system
(\ref{nematic}) if the associated norm is invariant under
the transformation $\left(u_{0}, d_{0}\right) \rightarrow\left(u_{0 \lambda}, d_{0 \lambda}\right) :=\left(\lambda u_{0}(\lambda x), d_{0}(\lambda x)\right)$ for all $\lambda>0$.

This fact leads us to consider  system (\ref{nematic})
in some critical spaces and we found the method in
\cite{YangFuSun2019} can be applied to (\ref{nematic})
to some degree.

In order to state our  results,
we first exhibit  the following  range
of the indexes and the solution spaces.

Throughout  this paper, we fix $N=3.$
Let
\begin{equation}\label{103}
\left\{\begin{array}{l}{ q_{j}, r_{j}>N-\lambda, 0 \leq \lambda<N, 1<r_{j} \leq q_{j}<\infty,\quad} \\
 {\frac{1}{q_{1}}+\frac{1}{q_{2}}<\frac{2}{N-\lambda}, \quad} \\ {\frac{2}{q_{2}}-\frac{1}{q_{1}}<\frac{1}{N-\lambda}, }\end{array}\right.
\end{equation}
and
\begin{equation}\label{104}
\left\{\begin{array}{l}{\alpha_{j}=1-\frac{N-\lambda}{q_{j}}}, \\ {\beta_{j}=1-\frac{N-\lambda}{r_{j}},}\end{array}\right.
\end{equation}
where  $j=1, 2$.


Let $M_{q, \lambda}$
and  $\mathbf{N}_{r, \lambda, q}^{\beta}$ be the Morrey and Besov-Morrey
spaces, respectively.
For the precise definition, we can refer to Section 2.

Let
$\Theta :=\mathbb{X} \times \mathbb{Y} $
with the usual product norm
\begin{equation*}
\|(u,  d)\|_{\Theta}:=\|u\|_{\mathbb{X}}+\|d\|_{\mathbb{Y}}.
\end{equation*}

For initial data, we choose the following space
\begin{equation*}
\mathbb{E}=\dot{\mathbf{N}}_{r_{1}, \lambda, \infty}^{-\beta_{1}} \times \dot{\mathbf{N}}_{r_{2}, \lambda, \infty}^{-\beta_{2}}.
\end{equation*}

For a Banach space $\mathbb{F}$, let $BC_{*}([0, \infty), \mathbb{F})$
be the Banach space of all maps $\pi : [0, \infty) \rightarrow \mathbb{F}$
such that $\pi(t)$ is bounded and continuous for $t>0$ with the respect to the norm
topology of $\mathbb{F}$ and continuous at $t=0$ with respect to the weakly-star topology of $\mathbb{F}$.

For the solution, we choose the following space
\begin{equation*}
\mathbb{X} :=\left\{u : \nabla \cdot u=0, \quad u \in B C_{*}\left([0, \infty), \dot{\mathbf{N}}_{r_{1}, \lambda, \infty}^{-\beta_{1}}\right), t^{\frac{\alpha_{1}}{2}} u \in B C_{*}\left([0, \infty), M_{q_{1}, \lambda}\right)\right\}
\end{equation*}
\begin{equation*}
\begin{aligned} \mathbb{Y} :=&\left\{d : \nabla d \in B C_{*}\left([0, \infty), \dot{\mathbf{N}}_{r_{2}, \lambda, \infty}^{-\beta_{2}}\right), t^{\frac{\alpha_{2}}{2}} \nabla d \in B C_{*}\left([0, \infty), M_{q_{2}, \lambda}\right), d \in B C_{*}\left([0, \infty), L^{\infty}\right)\right\} \end{aligned}
\end{equation*}
and
\begin{equation*}
\|u\|_{\mathbb{X}} :=\sup _{t>0}\|u(t)\|_{\dot{\mathbf{N}}_{r_{1}, \lambda, \infty}^{-\beta_{1}}}+\sup _{t>0}\left(t^{\frac{\alpha_{1}}{2}}\|u(t)\|_{M_{q_{1}, \lambda}}\right)
\end{equation*}
\begin{equation*}
\|d\|_{\mathbb{Y}} :=\sup _{t>0}\|\nabla d(t)\|_{\dot{\mathbf{N}}_{r_{2}, \lambda, \infty}^{-\beta_{2}}}+\sup _{t>0}\left(t^{\frac{\alpha_{2}}{2}}\|\nabla d(t)\|_{M_{q_{2}, \lambda}}\right)+\sup _{t>0}\|d(t)\|_{L^{\infty}}.
\end{equation*}

For each $\epsilon_{0}>0,$ we say $u \in \mathbb{X}_{\epsilon_{0}},$ if $\|u\|_{\mathbb{X}} \leq C \epsilon_{0} .$ We denote $\mathbb{Y}_{\epsilon_{0}}, \mathbb{Z}_{\epsilon_{0}}$ and $\Theta_{\epsilon_{0}}$ simply as $\mathbb{X}_{\epsilon_{0}}$, respectively.

Our first result is the uniquely existence of the global mild
solution to system (\ref{nematic}).
\begin{thm}\label{thm1}
Suppose that there hold $(\ref{103})$ and $(\ref{104})$.
There exists a sufficiently small $\epsilon_{0}>0$ such that if
 \begin{equation}\label{106}
\left\|\left(u_{0},  \nabla d_{0}\right)\right\|_{\mathbb{E}}
+\left\|d_{0}-\underline{d}_{0}\right\|_{L^{\infty}}
\leq \epsilon_{0},
\end{equation}
then there exists a unique global solution to  system (\ref{nematic})  such that
$(u,  d-\underline{d}_{0}) \in \Theta_{\epsilon_{0}}$.
\end{thm}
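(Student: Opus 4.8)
The plan is to recast (\ref{nematic}) as a fixed-point problem and solve it by a Picard-type contraction argument adapted to polynomial nonlinearities. First I would remove the value at infinity by setting $\tilde d := d-\underline{d}_0$, so that $\nabla d=\nabla\tilde d$ and the orientation field decays to zero. Applying the Leray projection $\mathbb P$ and Duhamel's formula to the momentum equation, and the heat semigroup to the director equation, and using $\nabla\cdot u=0$ to write the transport terms in divergence form, the system becomes
\begin{equation*}
u = e^{t\Delta}u_0 - \int_0^t e^{(t-s)\Delta}\mathbb P\,\nabla\cdot\big(u\otimes u + \nabla d\odot\nabla d\big)(s)\,ds,
\end{equation*}
\begin{equation*}
\tilde d = e^{t\Delta}\tilde d_0 + \int_0^t e^{(t-s)\Delta}\Big(-\,\nabla\cdot(u\,\tilde d) + |\nabla\tilde d|^2(\tilde d+\underline{d}_0)\Big)(s)\,ds.
\end{equation*}
Thus $(u,\tilde d)=\mathcal L(u_0,\tilde d_0)+B\big((u,\tilde d),(u,\tilde d)\big)+T\big((u,\tilde d),(u,\tilde d),(u,\tilde d)\big)$, where $\mathcal L$ is the linear evolution of the data, $B$ collects the quadratic terms ($u\otimes u$, $\nabla d\odot\nabla d$, $u\,\tilde d$, and the $\underline{d}_0|\nabla\tilde d|^2$ piece), and $T$ is the genuinely cubic term $|\nabla\tilde d|^2\tilde d$.

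Next I would establish the two families of estimates that drive the contraction. For the linear part, using the mapping properties of $e^{t\Delta}$ between the homogeneous Besov--Morrey space $\dot{\mathbf N}^{-\beta_j}_{r_j,\lambda,\infty}$ and the Morrey space $M_{q_j,\lambda}$ recorded in Section 2, I would show $\|\mathcal L(u_0,\tilde d_0)\|_{\Theta}\lesssim \|(u_0,\nabla d_0)\|_{\mathbb E}$ together with $\|e^{t\Delta}\tilde d_0\|_{L^\infty}\lesssim\|\tilde d_0\|_{L^\infty}$; the exponents $\alpha_j,\beta_j$ in (\ref{104}) are tuned exactly so that the time weights $t^{\alpha_j/2}$ render the $M_{q_j,\lambda}$-norms finite and scale-invariant. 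For the nonlinear part, combining the smoothing bound for $e^{(t-s)\Delta}\mathbb P\,\nabla\cdot$ with the Morrey Hölder inequality $\|fg\|_{M_{q,\lambda}}\lesssim\|f\|_{M_{q_1,\lambda}}\|g\|_{M_{q_2,\lambda}}$ (with $q^{-1}=q_1^{-1}+q_2^{-1}$), I would reduce the time integrals to Beta-type integrals $\int_0^t (t-s)^{-a}s^{-b}\,ds$; the inequalities in (\ref{103}) are precisely the conditions ensuring $a<1$ and $b<1$, so that each integral converges and the resulting power $t^{1-a-b}$ matches the prescribed time weight, giving, for $X,Y,Z\in\Theta$,
\begin{equation*}
\|B(X,Y)\|_{\Theta}\lesssim \|X\|_{\Theta}\|Y\|_{\Theta},\qquad
\|T(X,Y,Z)\|_{\Theta}\lesssim \|X\|_{\Theta}\|Y\|_{\Theta}\|Z\|_{\Theta}.
\end{equation*}

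With these in hand, the conclusion follows from an abstract fixed-point lemma for equations of the type $x=x_0+B(x,x)+T(x,x,x)$ in a Banach space: when $\|x_0\|_{\Theta}$ is small enough, which (\ref{106}) guarantees after choosing $\epsilon_0$ small, the map $x\mapsto x_0+B(x,x)+T(x,x,x)$ is a contraction on the closed ball $\Theta_{\epsilon_0}$, and its unique fixed point is the desired global mild solution. Uniqueness in the ball is then automatic, and the $BC_*$ time-continuity is recovered from the strong continuity of $e^{t\Delta}$ for $t>0$ and its weak-$*$ continuity at $t=0$ on the Besov--Morrey spaces.

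The main obstacle is the coupling of the genuinely cubic term with the quadratic stress term. The nonlinearity $|\nabla\tilde d|^2\tilde d$ carries no divergence, so one cannot extract a gradient to be absorbed by the smoothing of $e^{(t-s)\Delta}\nabla$; it must instead be controlled by pairing two Morrey factors of $\nabla\tilde d$ against $\|d\|_{L^\infty}$, which is exactly why $\mathbb Y$ must carry the extra component $d\in BC_*([0,\infty),L^\infty)$. At the same time, the Ericksen stress term $\nabla\cdot(\nabla d\odot\nabla d)$ feeds a product of two $M_{q_2,\lambda}$ factors into the $M_{q_1,\lambda}$ velocity norm; tracking the smoothing exponent of $e^{(t-s)\Delta}\mathbb P\,\nabla\cdot$ shows this integral converges precisely when $\tfrac12+\tfrac{N-\lambda}{2}\big(\tfrac{2}{q_2}-\tfrac{1}{q_1}\big)<1$, i.e.\ the third inequality in (\ref{103}). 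Verifying that a single contraction simultaneously closes the quadratic and cubic estimates, with consistent time weights across all three coupled norms of $\Theta$, is the delicate bookkeeping that distinguishes this problem from the purely quadratic Navier--Stokes setting.
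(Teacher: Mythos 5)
Your overall strategy coincides with the paper's: the same substitution $c=d-\underline{d}_{0}$, the same Duhamel/mild formulation, and a contraction in the same space $\Theta$ driven by the heat-semigroup smoothing estimates between $\dot{\mathbf N}^{-\beta_j}_{r_j,\lambda,\infty}$ and $M_{q_j,\lambda}$, the Morrey H\"older inequality, and Beta-type time integrals whose convergence is exactly encoded in (\ref{103}). Packaging the nonlinearity as $x_{0}+B(x,x)+T(x,x,x)$ rather than running the two explicit lemmas (uniform bound, then contraction) is only an organizational difference; in fact your multilinear framing automatically produces the Lipschitz estimate for the cubic term $|\nabla\overline c|^{2}c^{*}$ that the paper singles out in Remark 1.1 as the reason the $L^{\infty}$ component must sit inside $\mathbb Y$, and you correctly identify that mechanism.

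There is, however, one step that fails as written: putting the director transport term in divergence form, $u\cdot\nabla\tilde d=\nabla\cdot(u\,\tilde d)$. To close the norm $\sup_{t>0}t^{\alpha_{2}/2}\|\nabla\tilde d(t)\|_{M_{q_{2},\lambda}}$ you must then estimate $\nabla e^{(t-s)\Delta}\nabla\cdot(u\,\tilde d)$, which costs two derivatives, i.e.\ a factor $(t-s)^{-1-\frac{N-\lambda}{2}(\frac{1}{q_{1}}-\frac{1}{q_{2}})}$ against $\|u\,\tilde d\|_{q_{1},\lambda}\lesssim\|u\|_{q_{1},\lambda}\|\tilde d\|_{L^{\infty}}$. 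The resulting time integral $\int_{0}^{t}(t-s)^{-1-\frac{N-\lambda}{2}(\frac{1}{q_{1}}-\frac{1}{q_{2}})}s^{-\alpha_{1}/2}\,\mathrm{d}s$ diverges at $s=t$ whenever $q_{1}\le q_{2}$, a case fully allowed by (\ref{103}) (e.g.\ $q_{1}=q_{2}$ gives a logarithmic divergence). The paper avoids this by keeping the term in non-divergence form and pairing $\|u\|_{q_{1},\lambda}$ with $\|\nabla c\|_{q_{2},\lambda}$ via H\"older at exponent $\frac{q_{1}q_{2}}{q_{1}+q_{2}}$, so that only one derivative lands on the semigroup and the singularity is $(t-s)^{-\frac12-\frac{N-\lambda}{2q_{1}}}$, integrable since $q_{1}>N-\lambda$. (The divergence form is harmless, and indeed used by the paper, for the $L^{\infty}$ component, where only one derivative is needed.) Once you revert to the non-divergence form for the gradient norms, the rest of your argument goes through exactly as in the paper.
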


\begin{rem}
We would like to
point out  a small difficulty in the proof of Theorem \ref{thm1}.
The small difficulty comes from the term $|\nabla d|^{2}d$.
This term leads us to include the $L^{\infty}$-norm
of $d-\underline{d}_{0}$ to $\|d\|_{\mathbb{Y}}.$
In fact, in the proof of the uniform bound part(Lemma \ref{lem301}),
there's no need to consider the  $L^{\infty}$-norm
of $d-\underline{d}_{0}$, since $|d-\underline{d}_{0}|\leq 2.$
However, in the proof of the contraction part(Lemma \ref{lem302}),
we encounter the term $|\nabla \overline{c}|^{2}c^{*}$,
where $c^{*}=\tilde{c}-\overline{c}$ denotes the difference
of two solutions in the approximation sequence.
This term deny us to repeat the process of the  proof of the uniform bound part
to prove the contraction part if $L^{\infty}$-norm is not considered.
This is also a difference between the proof of Theorem \ref{thm1} in this paper
and the proof of Theorem 1.1 in \cite{YangFuSun2019},
where $L^{\infty}$-norm must be considered in the proof of the uniform bound part.
\end{rem}

Since we work in Besov-Morrey spaces with critical indexes,
 we have the following existence
result on forward self-similar solutions to system (\ref{nematic}).

\begin{cor}\label{cor1}
Let all conditions in Theorem \ref{thm1}
hold.
If the initial data $(u_{0}, d_{0})$ satisfy
\begin{equation*}
u_{0}(x)=\lambda u_{0}\left(\lambda x\right),  \qquad d_{0}(x)=d_{0}\left(\lambda x\right)
\end{equation*}
for all $x \in \mathbb{R}^{N}$ and $\lambda>0$.
Then the  global solution $(u,  d)$ of system (\ref{nematic})
given by Theorem \ref{thm1} satisfy
\begin{equation*}
u(t, x)=\lambda u\left(\lambda^{2} t, \lambda x\right),\qquad
d(t, x)=d\left(\lambda^{2} t, \lambda x\right).
\end{equation*}
\end{cor}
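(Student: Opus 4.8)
Let me plan a proof.

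The plan is to derive self-similarity from the \emph{uniqueness} half of Theorem~\ref{thm1}, exploiting that the solution space $\Theta_{\epsilon_0}$ has scale-invariant norms precisely because the indexes in (\ref{104}) are critical. Fix $\lambda>0$ and, starting from the solution $(u,d)$ supplied by Theorem~\ref{thm1}, define the rescaled pair
\begin{equation*}
u^{\lambda}(x,t) := \lambda\, u(\lambda x, \lambda^{2} t), \qquad d^{\lambda}(x,t) := d(\lambda x, \lambda^{2} t).
\end{equation*}
By the scaling invariance (\ref{scaling}) of system (\ref{nematic}), the pair $(u^{\lambda},d^{\lambda})$ (together with the correspondingly rescaled pressure) is again a solution of (\ref{nematic}); equivalently, it is again a fixed point of the Duhamel integral map whose unique fixed point in $\Theta_{\epsilon_0}$ produced $(u,d)$. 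Its initial data agree with those of $(u,d)$: evaluating at $t=0$ and using the hypotheses $u_0(x)=\lambda u_0(\lambda x)$ and $d_0(x)=d_0(\lambda x)$ gives $u^{\lambda}(x,0)=\lambda u_0(\lambda x)=u_0(x)$ and $d^{\lambda}(x,0)=d_0(\lambda x)=d_0(x)$.

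The crucial step is to check that $(u^{\lambda},d^{\lambda})\in\Theta_{\epsilon_0}$, i.e.\ that each constituent norm is invariant under this rescaling; this is where the criticality of $\alpha_j,\beta_j$ enters. Using the dilation law of the Morrey norm, $\|f(\lambda\,\cdot)\|_{M_{q,\lambda}}=\lambda^{-(N-\lambda)/q}\|f\|_{M_{q,\lambda}}$, and the analogous homogeneity $\|f(\lambda\,\cdot)\|_{\dot{\mathbf{N}}^{s}_{r,\lambda,\infty}}=\lambda^{\,s-(N-\lambda)/r}\|f\|_{\dot{\mathbf{N}}^{s}_{r,\lambda,\infty}}$ (both recorded in Section~2), one computes for the velocity part
\begin{equation*}
\|u^{\lambda}(t)\|_{\dot{\mathbf{N}}^{-\beta_1}_{r_1,\lambda,\infty}} = \lambda^{\,1-\beta_1-(N-\lambda)/r_1}\,\|u(\lambda^{2}t)\|_{\dot{\mathbf{N}}^{-\beta_1}_{r_1,\lambda,\infty}} = \|u(\lambda^{2}t)\|_{\dot{\mathbf{N}}^{-\beta_1}_{r_1,\lambda,\infty}},
\end{equation*}
since $1-\beta_1-(N-\lambda)/r_1=0$ by the definition of $\beta_1$ in (\ref{104}); taking the supremum over $t>0$ leaves this term unchanged. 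For the time-weighted Morrey term,
\begin{equation*}
t^{\alpha_1/2}\,\|u^{\lambda}(t)\|_{M_{q_1,\lambda}} = t^{\alpha_1/2}\,\lambda^{\,1-(N-\lambda)/q_1}\,\|u(\lambda^{2}t)\|_{M_{q_1,\lambda}} = (\lambda^{2}t)^{\alpha_1/2}\,\|u(\lambda^{2}t)\|_{M_{q_1,\lambda}},
\end{equation*}
because $\alpha_1=1-(N-\lambda)/q_1$, so the substitution $s=\lambda^{2}t$ shows the supremum over $t>0$ is preserved, whence $\|u^{\lambda}\|_{\mathbb{X}}=\|u\|_{\mathbb{X}}$.

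The director field is handled identically after noting $\nabla_x d^{\lambda}(x,t)=\lambda(\nabla d)(\lambda x,\lambda^{2}t)$, so $\nabla d^{\lambda}$ obeys exactly the same homogeneity as $u^{\lambda}$ and the indexes $\alpha_2,\beta_2,q_2,r_2$ make $\|d^{\lambda}\|_{\mathbb{Y}}$ invariant; the $L^{\infty}$ pieces are trivially invariant since $\|d^{\lambda}(t)-\underline{d}_0\|_{L^{\infty}}=\|d(\lambda^{2}t)-\underline{d}_0\|_{L^{\infty}}$. Therefore $(u^{\lambda},d^{\lambda}-\underline{d}_0)\in\Theta_{\epsilon_0}$ and shares the initial data of $(u,d-\underline{d}_0)$, so by the uniqueness assertion of Theorem~\ref{thm1} the two solutions coincide, giving $u(x,t)=\lambda u(\lambda x,\lambda^{2}t)$ and $d(x,t)=d(\lambda x,\lambda^{2}t)$ for every $\lambda>0$, which is the claimed self-similarity. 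The only genuine obstacle is the norm-invariance of the previous paragraph, and it reduces to the arithmetic identities $1-\beta_j-(N-\lambda)/r_j=0$ and $\alpha_j=1-(N-\lambda)/q_j$ built into (\ref{104}); everything else is a direct appeal to uniqueness.
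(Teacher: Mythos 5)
Your argument is correct and is exactly the one the paper intends (the corollary is stated without proof, being attributed to the criticality of the indexes): rescale the solution via (\ref{scaling}), check that the $\Theta$-norm is invariant because $\alpha_j=1-\frac{N-\lambda}{q_j}$ and $\beta_j=1-\frac{N-\lambda}{r_j}$ make every scaling exponent vanish, and conclude by the uniqueness in $\Theta_{\epsilon_0}$ from Theorem \ref{thm1}. I see no gaps.
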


We also prove an asymptotic behavior result of the global
mild solution obtained in Theorem \ref{thm1} as $t\rightarrow\infty.$
\begin{thm}\label{thm2}
Let the assumptions  in  Theorem \ref{thm1} hold, and let
$(\overline{u},  \overline{d})$ and $(\tilde{u},  \tilde{d})$ be two global
solutions for system (\ref{nematic})
given by Theorem  \ref{thm1} corresponding to  initial data
$(\overline{u}_{0},  \overline{d}_{0})$ and $(\tilde{u}_{0}, \tilde{d}_{0}),$ respectively, where
\begin{equation*}
\left\|\left(\overline{u}_{0},  \nabla \overline{d}_{0}\right)\right\|_{\mathbb{E}}
+\left\|\overline{d}_{0}-\underline{d}_{0}\right\|_{L^{\infty}}
\leq \epsilon_{0}
\end{equation*}
and
\begin{equation*}
\left\|\left(\tilde{u}_{0},  \nabla \tilde{d}_{0}\right)\right\|_{\mathbb{E}}
+\left\|\tilde{d}_{0}-\underline{d}_{0}\right\|_{L^{\infty}}
\leq \epsilon_{0}.
\end{equation*}
Then we conclude that
\begin{equation}\label{107}
\begin{array}{l}{\lim _{t \rightarrow \infty}\left(t^{\frac{\alpha_{1}}{2}}\left\|e^{t \Delta}\left(\overline{u}_{0}-\tilde{u}_{0}\right)\right\|_{q_{1}, \lambda}+t^{\frac{\alpha_{2}}{2}}\left\|\nabla e^{t \Delta}\left(\overline{d}_{0}-\tilde{d}_{0}\right)\right\|_{q_{2}, \lambda}\right)} \\ {\quad+\lim _{t \rightarrow \infty}\left(\left\|e^{t \Delta}\left(\overline{u}_{0}-\tilde{u}_{0},  \nabla \overline{d}_{0}-\nabla \tilde{d}_{0}\right)\right\|_{\mathbb{E}}+\left\|e^{t \Delta}\left(\overline{d}_{0}-\tilde{d}_{0}\right)\right\|_{L^{\infty}}\right)=0}\end{array}
\end{equation}
if and only if
\begin{equation}\label{108}
\begin{array}{c}{\lim _{t \rightarrow \infty}\left(t^{\frac{\alpha_{1}}{2}}\|(\overline{u}-\tilde{u})\|_{q_{1}, \lambda}+t^{\frac{\alpha_{2}}{2}}\|\nabla(\overline{d}-\tilde{d})\|_{q_{2}, \lambda}\right)} \\ {+\lim _{t \rightarrow \infty}\left(\|(\overline{u}-\tilde{u}, \nabla \overline{d}-\nabla \tilde{d})\|_{\mathbb{E}}+\|\overline{d}-\tilde{d}\|_{L^{\infty}}
\right)=0.}\end{array}
\end{equation}
\end{thm}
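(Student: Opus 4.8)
The plan is to work with the integral (mild) formulation of (\ref{nematic}) and to reuse the bilinear and trilinear estimates already established in the proof of Theorem \ref{thm1}. Writing $\mathbb{P}$ for the Leray projection, each global solution obeys
\begin{equation*}
u(t)=e^{t\Delta}u_{0}-\int_{0}^{t}e^{(t-s)\Delta}\mathbb{P}\nabla\cdot\big(u\otimes u+\nabla d\odot\nabla d\big)(s)\,ds,
\end{equation*}
\begin{equation*}
d(t)=e^{t\Delta}d_{0}-\int_{0}^{t}e^{(t-s)\Delta}\big((u\cdot\nabla)d-|\nabla d|^{2}d\big)(s)\,ds.
\end{equation*}
Subtracting the equations for the two solutions and setting $w=\overline{u}-\tilde{u}$ and $e=\overline{d}-\tilde{d}$, I would expand every nonlinear difference so that each resulting monomial carries exactly one difference factor, e.g. $(\overline{u}\cdot\nabla)\overline{u}-(\tilde{u}\cdot\nabla)\tilde{u}=(w\cdot\nabla)\overline{u}+(\tilde{u}\cdot\nabla)w$, and similarly for $\nabla d\odot\nabla d$ and $(u\cdot\nabla)d$; for the cubic term one writes $|\nabla\overline{d}|^{2}\overline{d}-|\nabla\tilde{d}|^{2}\tilde{d}=|\nabla\overline{d}|^{2}e+\big((\nabla\overline{d}+\nabla\tilde{d})\cdot\nabla e\big)\tilde{d}$. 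In every term one factor is a difference ($w$, $\nabla e$, or $e$) while the remaining factors are components of the small solutions, so each term is controlled by $\epsilon_{0}$ times a norm of the difference.

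Let $D(t)$ denote the quantity inside the two limits in (\ref{108}) and $L(t)$ the corresponding quantity for the linear flow in (\ref{107}). The decomposition above gives $D(t)\leq L(t)+\mathcal{N}(t)$, where $\mathcal{N}(t)$ collects the Duhamel integrals of the nonlinear differences, and conversely $L(t)\leq D(t)+\mathcal{N}(t)$. The crucial step is the asymptotic bound
\begin{equation*}
\limsup_{t\to\infty}\mathcal{N}(t)\leq C\epsilon_{0}\,\limsup_{t\to\infty}D(t).
\end{equation*}
To prove it I would split each Duhamel integral at $s=t/2$. On $[t/2,t]$ the bilinear and trilinear estimates from the proof of Theorem \ref{thm1} produce a factor $\epsilon_{0}$ multiplying a supremum of difference norms over $[t/2,t]$, whose $\limsup$ is dominated by $\limsup_{t}D(t)$; on $[0,t/2]$ the decay and smoothing of $e^{(t-s)\Delta}$ in the Morrey scale force the contribution to tend to $0$ as $t\to\infty$, since the integrand is bounded in terms of the finite global norms of the two solutions.

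Granting this estimate, both implications are immediate. Taking $\limsup_{t\to\infty}$ in $D\leq L+\mathcal{N}$ yields $(1-C\epsilon_{0})\limsup_{t}D(t)\leq\limsup_{t}L(t)$; since $\epsilon_{0}$ is small, (\ref{107}) forces $\limsup_{t}D(t)=0$ and hence, by nonnegativity, $\lim_{t}D(t)=0$, which is (\ref{108}). Conversely, $\limsup_{t\to\infty}$ in $L\leq D+\mathcal{N}$ gives $\limsup_{t}L(t)\leq(1+C\epsilon_{0})\limsup_{t}D(t)$, so (\ref{108}) implies $\limsup_{t}L(t)=0$, i.e. (\ref{107}). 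The main obstacle is the asymptotic estimate for $\mathcal{N}(t)$: one must check, within the Besov-Morrey and Morrey framework, that the time-weighted norms are compatible with the $s=t/2$ splitting, and that the cubic term $|\nabla\overline{d}|^{2}e$—which is exactly why $\|d\|_{\mathbb{Y}}$ retains the $L^{\infty}$-norm of $d-\underline{d}_{0}$—can be closed using $\|e\|_{L^{\infty}}$ rather than a derivative norm of $e$.
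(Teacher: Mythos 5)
Your overall framework coincides with the paper's: both write the difference of the two mild solutions via Duhamel, distribute exactly one difference factor ($u^{*}$, $\nabla c^{*}$, or $c^{*}$) into each nonlinear monomial as you do (including the cubic term, whose closure via $\|c^{*}\|_{L^{\infty}}$ is indeed the reason the $L^{\infty}$-norm sits inside $\|\cdot\|_{\mathbb{Y}}$), and both close with an absorption inequality for $M=\limsup_{t\to\infty}l(t)$ after noting that the linear part tends to zero. The gap is in your treatment of the near-origin portion of the Duhamel integral. Your claim that the contribution of $\int_{0}^{t/2}$ tends to $0$ as $t\to\infty$ "since the integrand is bounded in terms of the finite global norms" is false at critical scaling. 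Take the prototypical term from the $\dot{\mathbf{N}}_{r_{1},\lambda,\infty}^{-\beta_{1}}$ estimate: it is bounded by $\epsilon_{0}\int_{0}^{t/2}(t-\tau)^{-\frac{N-\lambda}{q_{1}}}\tau^{-\alpha_{1}}\bigl(\tau^{\alpha_{1}/2}\|u^{*}(\tau)\|_{q_{1},\lambda}\bigr)\,\mathrm{d}\tau$. Because $\alpha_{1}=1-\frac{N-\lambda}{q_{1}}$, the substitution $\tau=ts$ turns the weight into the $t$-independent, strictly positive constant $\int_{0}^{1/2}(1-s)^{-\frac{N-\lambda}{q_{1}}}s^{-\alpha_{1}}\,\mathrm{d}s$; the heat kernel decay exactly balances the length of the interval, so this piece is $O(\epsilon_{0}\sup_{\tau>0}\tau^{\alpha_{1}/2}\|u^{*}(\tau)\|_{q_{1},\lambda})$, a constant, not $o(1)$. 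Worse, since the supremum runs over all of $(0,t/2]$ and not only over large times, this naive bound yields $\limsup_{t}\mathcal{N}(t)\leq C\epsilon_{0}\sup_{\tau>0}D(\tau)$ rather than $C\epsilon_{0}\limsup_{\tau\to\infty}D(\tau)$, and your absorption step $(1-C\epsilon_{0})\limsup D\leq\limsup L$ no longer follows.

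The missing ingredient is the argument the paper uses in Lemma \ref{lem401}: after the rescaling $\tau=ts$ the $[0,\delta t]$ piece becomes $\epsilon_{0}\int_{0}^{\delta}w(s)\,g(ts)\,\mathrm{d}s$ with $w$ integrable and $g(ts)=(ts)^{\alpha_{1}/2}\|u^{*}(ts)\|_{q_{1},\lambda}$ bounded; since $ts\to\infty$ for each fixed $s>0$, reverse Fatou (dominated convergence) gives that the $\limsup_{t}$ of this piece is at most $\epsilon_{0}F(\delta)M$, which is of the right form to be absorbed together with the $\epsilon_{0}M$ coming from $[\delta t,t]$. (Alternatively, a finer splitting would rescue your route: on $[0,\sqrt{t}]$ the weight genuinely decays like $t^{-(1-\alpha_{1})/2}$, and on $[\sqrt{t},t]$ the supremum of the difference norms does converge to $M$; but the single split at $t/2$ with the vanishing claim for the left half does not work.) As written, the key estimate $\limsup_{t}\mathcal{N}(t)\leq C\epsilon_{0}\limsup_{t}D(t)$ is not established.
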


%

The remaining of this paper is organized as follows.
In section 2, we review  some basic
properties of Morrey and Besov-Morrey space.
In section 3, we prove Theorem \ref{thm1}.
In section 4, we prove Theorem \ref{thm2}.

\section{Preliminaries}

In this section,
the basic properties of Morrey and Besov-Morrey space is reviewed  for the reader's convenience,
more details can be found in \cite{ap,hw12,hw17, Mazzucato2003,hw25,t}.

Let $Q_{r}(x_{0})$  be the open ball in $\mathbb{R}^{N}$ centered at $x_{0}$ and with radius $r>0.$
Given two parameters $1\leq p<\infty$ and $0\leq \mu<N,$
the Morrey spaces $M_{p, \mu}=M_{p, \mu}(\mathbb{R}^{N})$ is defined to be
the set of functions $f\in L^{p}(Q_{r}(x_{0}))$ such that
\begin{equation}\label{extraeq201}
  \|f\|_{p,\mu}\triangleq
  \sup_{x_{0}\in \mathbb{R}^{N}}\sup_{r>0}r^{-\mu/p}\|f\|_{L^{p}(Q_{r}(x_{0}))}<\infty
\end{equation}
which is a Banach space endowed with norm (\ref{extraeq201}).
For $s\in \mathbb{R}$ and $1\leq p<\infty,$ the homogenous Sobolev-Morrey space
 $M_{p,\mu}^{s}=(-\Delta)^{-s/2}M_{p, \mu}$ is the Banach space
 with norm
 \begin{equation}\label{eq202}
  \|f\|_{M_{p,\mu}^{s}}=\|(-\Delta)^{s/2}f\|_{p,\mu}.\nonumber
 \end{equation}

Taking $p=1,$ we have $\|f\|_{L^{1}\left(Q_{r}\left(x_{0}\right)\right)}$
denotes the total variation of $f$ on open ball  $Q_{r}\left(x_{0}\right)$
and $M_{1, \mu}$
stands for space of signed measures. In particular, $M_{1, 0}=M$
 is  the space of finite measures. For $p>1,$ we have $M_{p, 0}=L^{p}$
  and $M_{p, 0}^{s}=\dot{H}_{p}^{s}$ is the well known Sobolev space.
  The space $L^{\infty}$ corresponds to $M_{\infty, \mu}.$
  Morrey and Sobolev-Morrey spaces present the following scaling
\begin{equation}\label{eq203}
\|f(\lambda \cdot)\|_{p, \mu}=\lambda^{-\frac{N-\mu}{p}}\|f\|_{p, \mu}\nonumber
\end{equation}
and
\begin{equation}\label{eq204}
\|f(\lambda \cdot)\|_{M_{p, \mu}^{s}}=\lambda^{s-\frac{N-\mu}{p}}\|f\|_{M_{p, \mu}^{s}},\nonumber
\end{equation}
where the  exponent $s-\frac{N-\mu}{p}$ is called scaling index and $s$ is called regularity index.
We have that
\begin{equation}\label{eq205}
(-\Delta)^{l / 2} M_{p, \mu}^{s}=M_{p, \mu}^{s-l}.\nonumber
\end{equation}
Morrey spaces contain Lebesgue and weak-$L^{p}$, with the same scaling index. Precisely, we have
the continuous proper inclusions
\begin{equation}\label{eq206}
L^{p}\left(\mathbb{R}^{N}\right) \nsubseteq \operatorname{weak}-L^{p}\left(\mathbb{R}^{N}\right) \nsubseteq M_{r, \mu}\left(\mathbb{R}^{N}\right),\nonumber
\end{equation}
where $r<p$ and $\mu=N(1-r/p)$(see e.g. \cite{hw23}).

Let $\mathcal{S}(\mathbb{R}^{N})$ and $\mathcal{S}^{'}(\mathbb{R}^{N})$ be the Schwartz space
and the tempered distributions, respectively. Let $\varphi\in \mathcal{S}(\mathbb{R}^{N})$
be nonnegative radial function such that
\begin{equation}
\operatorname{supp}(\varphi) \subset\left\{\xi \in \mathbb{R}^{N} ; \frac{1}{2}<|\xi|<2\right\}\nonumber
\end{equation}
and
\begin{equation}
\sum_{j=-\infty}^{\infty} \varphi_{j}(\xi)=1, \text { for all } \xi \neq 0,\nonumber
\end{equation}
where $\varphi_{j}(\xi)=\varphi(2^{-j}\xi)$.
Let $\phi(x)=\mathcal{F}^{-1}(\varphi)(x)$
and $\phi_{j}(x)=\mathcal{F}^{-1}\left(\varphi_{j}\right)(x)=2^{j n} \phi\left(2^{j} x\right)$
where $\mathcal{F}^{-1}$ stands for inverse Fourier transform.
For $1 \leq q<\infty, 0 \leq \mu<n$ and $s \in \mathbb{R}$,
the homogeneous Besov-Morrey space
$\dot{\mathbf{N}}_{q, \mu, r}^{s}\left(\mathbb{R}^{N}\right)$
 ($\dot{\mathbf{N}}_{q, \mu, r}^{s}$ for short)
is defined to be the set of $u\in\mathcal{S}^{'}(\mathbb{R}^{N}) $,
 modulo polynomials $\mathcal{P},$
 such that
 $\mathcal{F}^{-1} \varphi_{j}(\xi) \mathcal{F} u \in M_{q, \mu}$ for all $j \in \mathbb{Z}$ and
\begin{equation}\label{eq207}
\|u\|_{\dot{\mathbf{N}}_{q, \mu, r}^{s}}=\left\{\begin{array}{ll}{\left(\sum_{j \in \mathbb{Z}}\left(2^{j s}\left\|\phi_{j} * u\right\|_{q, \mu}\right)^{r}\right)^{\frac{1}{r}}<\infty,} & {1 \leq r<\infty,} \\ {\sup _{j \in \mathbb{Z}} 2^{j s}\left\|\phi_{j} * u\right\|_{q, \mu}<\infty,} & {r=\infty.}\end{array}\right.\nonumber
\end{equation}
The space $\dot{\mathbf{N}}_{q, \mu, r}^{s}\left(\mathbb{R}^{N}\right)$
is a Banach space and, in particular, $\dot{\mathbf{N}}_{q, 0, r}^{s}=\dot{B}_{q, r}^{s}$ (case $\mu=0$ )
corresponds to the
homogeneous Besov space. We have the real-interpolation properties
\begin{equation}\label{hweq1}
\dot{\mathbf{N}}_{q, \mu, r}^{s}=\left(M_{q, \mu}^{s_{1}}, M_{q, \mu}^{s_{2}}\right)_{\theta, r}
\end{equation}
and
\begin{equation}\label{eq208}
\dot{\mathbf{N}}_{q, \mu, r}^{s}=\left(\dot{\mathbf{N}}_{q, \mu, r_{1}}^{s_{1}}, \dot{\mathbf{N}}_{q, \mu, r_{2}}^{s_{2}}\right)_{\theta, r}
\end{equation}
for all $s_{1} \neq s_{2}, 0<\theta<1$ and $s=(1-\theta) s_{1}+\theta s_{2}.$
Here $(X, Y)_{\theta, r}$ stands for the real interpolation space
between $X$ and $Y$  constructed via the $K_{\theta, q}-$method.
Recall that $(\cdot, \cdot)_{\theta, r}$ is an exact interpolation functor of
exponent $\theta$ on the category of normed spaces.

In the next lemmas, we collect basic facts about Morrey spaces and Besov-Morrey spaces
(see \cite{ap, hw12, t}).
\begin{lem}\label{lem1}
Suppose that $s_{1}, s_{2} \in \mathbb{R}, 1 \leq p_{1}, p_{2}, p_{3}<\infty$ and $0 \leq \mu_{i}<N, i=1,2,3$.\\
(i)(Inclusion)\ \ If $\frac{N-\mu_{1}}{p_{1}}=\frac{N-\mu_{2}}{p_{2}}$ and $p_{2} \leq p_{1}$,
then
\begin{equation*}\label{eq209}
M_{p_{1}, \mu_{1}} \hookrightarrow M_{p_{2}, \mu_{2}} \quad \text { and } \dot{\mathbf{N}}_{p_{1}, \mu_{1}, 1}^{0} \hookrightarrow M_{p_{1}, \mu_{1}} \hookrightarrow \dot{\mathbf{N}}_{p_{1}, \mu_{1}, \infty}^{0}.
\end{equation*}
(ii)(Sobolev-type embedding)\ \ Let
$j=1,2$ and $p_{j}, s_{j}$ be $p_{2} \leq p_{1}, s_{1} \leq s_{2}$ such that $s_{2}-\frac{N-\mu_{2}}{p_{2}}=s_{1}-\frac{N-\mu_{1}}{p_{1}}$,
then we have
\begin{equation*}\label{eq210}
M_{p_{2}, \mu}^{s_{2}} \hookrightarrow M_{p_{1}, \mu}^{s_{1}},\left(\mu=\mu_{1}=\mu_{2}\right)
\end{equation*}
and for every $1 \leq r_{2} \leq r_{1} \leq \infty,$ we have
\begin{equation*}\label{eq211}
\dot{\mathbf{N}}_{p_{2}, \mu_{2}, r_{2}}^{s_{2}} \hookrightarrow \dot{\mathbf{N}}_{p_{1}, \mu_{1}, r_{1}}^{s_{1}} \quad \text { and } \dot{\mathbf{N}}_{p_{2}, \mu_{2}, r_{2}}^{s_{2}} \hookrightarrow \dot{B}_{\infty, r_{2}}^{s_{2}-\frac{n-\mu_{2}}{p_{2}}}.
\end{equation*}
(iii)(H\"{o}lder inequality)\ \
Let $\frac{1}{p_{3}}=\frac{1}{p_{2}}+\frac{1}{p_{1}}$ and $\frac{\mu_{3}}{p_{3}}=\frac{\mu_{2}}{p_{2}}+\frac{\mu_{1}}{p_{1}}$.
If $f_{j} \in M_{p_{j}, \mu_{j}}$ with $j=1,2,$ then $f_{1} f_{2} \in M_{p_{3}, \mu_{3}}$ and
\begin{equation*}\label{eq212}
\left\|f_{1} f_{2}\right\|_{p_{3}, \mu_{3}} \leq\left\|f_{1}\right\|_{p_{1}, \mu_{1}}\left\|f_{2}\right\|_{p_{2}, \mu_{2}}.
\end{equation*}
\end{lem}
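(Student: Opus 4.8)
The plan is to establish each of the three parts by reducing to the defining ball-wise $L^{p}$ estimates together with the standard Littlewood-Paley machinery. The Hölder inequality (iii) and the Morrey inclusion in (i) are the most transparent, so I would dispatch them first; the genuine content sits in the Sobolev-type embeddings of part (ii).

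For part (iii), I would fix a ball $Q_{r}(x_{0})$ and apply the classical Hölder inequality with the conjugate exponents $p_{1}/p_{3}$ and $p_{2}/p_{3}$ (legitimate since $\frac{p_{3}}{p_{1}}+\frac{p_{3}}{p_{2}}=1$), giving $\|f_{1}f_{2}\|_{L^{p_{3}}(Q_{r})}\leq\|f_{1}\|_{L^{p_{1}}(Q_{r})}\|f_{2}\|_{L^{p_{2}}(Q_{r})}$. Multiplying by $r^{-\mu_{3}/p_{3}}$ and splitting the weight as $r^{-\mu_{1}/p_{1}}r^{-\mu_{2}/p_{2}}$ --- which is exactly what the hypothesis $\frac{\mu_{3}}{p_{3}}=\frac{\mu_{1}}{p_{1}}+\frac{\mu_{2}}{p_{2}}$ permits --- and then taking the supremum over $x_{0}$ and $r$ yields the claim. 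The Morrey inclusion in (i) is identical in spirit: on each ball Hölder gives $\|f\|_{L^{p_{2}}(Q_{r})}\leq Cr^{N(1/p_{2}-1/p_{1})}\|f\|_{L^{p_{1}}(Q_{r})}$, and the identity $\frac{N-\mu_{1}}{p_{1}}=\frac{N-\mu_{2}}{p_{2}}$ is precisely the condition making the resulting power of $r$ cancel against the two Morrey weights, so the embedding is uniform in $r$. The Besov-Morrey inclusions $\dot{\mathbf{N}}_{p_{1},\mu_{1},1}^{0}\hookrightarrow M_{p_{1},\mu_{1}}\hookrightarrow\dot{\mathbf{N}}_{p_{1},\mu_{1},\infty}^{0}$ I would read off the definition: the left embedding is the triangle inequality applied to $f=\sum_{j}\phi_{j}*f$ together with $\ell^{1}\hookrightarrow\ell^{\infty}$, while the right embedding uses that convolution with the $L^{1}$-normalized kernels $\phi_{j}$ is bounded on $M_{p_{1},\mu_{1}}$ uniformly in $j$, i.e. a Young-type inequality $\|\phi_{j}*f\|_{p_{1},\mu_{1}}\leq\|\phi\|_{L^{1}}\|f\|_{p_{1},\mu_{1}}$ for Morrey spaces.

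The substance of the lemma is part (ii). For the Sobolev-Morrey embedding $M_{p_{2},\mu}^{s_{2}}\hookrightarrow M_{p_{1},\mu}^{s_{1}}$ I would factor $(-\Delta)^{s_{1}/2}=(-\Delta)^{-\gamma/2}(-\Delta)^{s_{2}/2}$ with $\gamma=s_{2}-s_{1}\geq 0$ and invoke the boundedness of the Riesz potential $(-\Delta)^{-\gamma/2}:M_{p_{2},\mu}\to M_{p_{1},\mu}$; the scaling hypothesis $s_{2}-\frac{N-\mu}{p_{2}}=s_{1}-\frac{N-\mu}{p_{1}}$ forces $\gamma=(N-\mu)(\frac{1}{p_{2}}-\frac{1}{p_{1}})$, which is exactly the exponent for which the Hardy-Littlewood-Sobolev inequality holds in the Morrey scale (Adams' theorem). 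For the Besov-Morrey embedding the mechanism is a frequency-localized Bernstein inequality for Morrey spaces, available in the cited references: for a single dyadic block one has $\|\phi_{j}*f\|_{p_{1},\mu_{1}}\leq C\,2^{j[(N-\mu_{2})/p_{2}-(N-\mu_{1})/p_{1}]}\|\phi_{j}*f\|_{p_{2},\mu_{2}}$, valid since $p_{2}\leq p_{1}$ and the effective scaling dimension $\frac{N-\mu}{p}$ grows as the integrability drops. Multiplying by $2^{js_{1}}$ converts the prefactor into $2^{js_{2}}$ by the scaling relation, and then $\ell^{r_{2}}\hookrightarrow\ell^{r_{1}}$ finishes the first claim; taking $p_{1}=\infty$, $\mu_{1}=0$ in the same estimate gives the embedding into $\dot{B}_{\infty,r_{2}}^{s_{2}-(n-\mu_{2})/p_{2}}$.

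I expect the main obstacle to be the Morrey-space Hardy-Littlewood-Sobolev and Bernstein estimates rather than the index bookkeeping: these are not formal consequences of the definitions but depend on Adams-type boundedness of Riesz potentials and on the behaviour of Littlewood-Paley projections on Morrey spaces. Since the statement is quoted from \cite{ap, hw12, t}, I would cite those sources for the Young, HLS, and Bernstein inequalities in the Morrey scale and keep the exposition at the level of verifying that the index conditions in the hypotheses match the exponents those theorems require.
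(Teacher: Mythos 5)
Your proposal is correct, and it reconstructs exactly the standard arguments (ball-wise H\"older, Morrey--Young for the dyadic blocks, Adams' boundedness of Riesz potentials, and the Morrey--Bernstein inequality) that underlie this lemma. The paper itself gives no proof --- it simply quotes these facts from \cite{ap, hw12, t} --- so your sketch is consistent with, and fills in, what those references establish; the index bookkeeping you carry out (the cancellation of the power of $r$ in (i), the conversion of $2^{js_{1}}$ into $2^{js_{2}}$ via the scaling relation in (ii)) is accurate.
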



Set $\alpha=1$ in [\cite{ap}, Lemma 3.1],
we have the following decay estimates about
the heat semi-group in the Sobolev-Morrey
or Besov-Morrey space.
\begin{lem}\label{lem3}
Let $s, \beta \in \mathbb{R}, 1<p \leq q\leq\infty, 0 \leq \mu<N,$ and $(\beta-s)+\frac{N-\mu}{p}-\frac{N-\mu}{q}<2$ where $\beta \geq s.$\\
\ \ There exists $C > 0$ such that
\begin{equation}
\left\|e^{t\Delta} f\right\|_{M_{q, \mu}^{\beta}} \leq C t^{-\frac{1}{2}(\beta-s)-\frac{1}{2}\left(\frac{N-\mu}{p}-\frac{N-\mu}{q}\right)}\|f\|_{M_{p, \mu}^{s}}\nonumber
\end{equation}
for every $t>0$ and $f \in M_{p, \mu}^{s}$.
\end{lem}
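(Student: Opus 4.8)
The plan is to prove Lemma \ref{lem3} by reducing the Sobolev-Morrey estimate to a decay estimate for the convolution kernel of the fractional-heat operator $(-\Delta)^{(\beta-s)/2}e^{t\Delta}$ acting on Morrey spaces, then combining it with the Morrey version of Young's convolution inequality. First I would rewrite the left-hand side using the definition of the Sobolev-Morrey norm and the commutation of Fourier multipliers: since $M_{q,\mu}^{\beta}=(-\Delta)^{-\beta/2}M_{q,\mu}$ and $M_{p,\mu}^{s}=(-\Delta)^{-s/2}M_{p,\mu}$, writing $g=(-\Delta)^{s/2}f\in M_{p,\mu}$ reduces the claim to
\begin{equation*}
\bigl\|(-\Delta)^{(\beta-s)/2}e^{t\Delta}g\bigr\|_{q,\mu}\leq C\,t^{-\frac{1}{2}(\beta-s)-\frac{1}{2}\left(\frac{N-\mu}{p}-\frac{N-\mu}{q}\right)}\|g\|_{p,\mu}.
\end{equation*}
The operator $(-\Delta)^{(\beta-s)/2}e^{t\Delta}$ is convolution with a kernel $K_{t}$ whose symbol is $|\xi|^{\beta-s}e^{-t|\xi|^2}$; by scaling, $K_{t}(x)=t^{-\frac{N}{2}-\frac{\beta-s}{2}}K_{1}(x/\sqrt{t})$, where $K_1$ is a fixed Schwartz-type function (smooth and rapidly decaying, since $\beta\ge s$ makes the symbol well-behaved at the origin in the sense needed after the dyadic/homogeneous interpretation).

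Next I would invoke the Morrey Young inequality: convolution with an $L^{\rho}$ kernel maps $M_{p,\mu}$ into $M_{q,\mu}$ with the Lebesgue exponents satisfying $\frac{1}{q}=\frac{1}{p}+\frac{1}{\rho}-1$ and the Morrey parameter $\mu$ preserved, so that $\|K_t*g\|_{q,\mu}\le C\|K_t\|_{\rho}\|g\|_{p,\mu}$. Choosing $\rho$ by this relation and using the scaling of $K_t$ gives
\begin{equation*}
\|K_t\|_{\rho}=t^{-\frac{\beta-s}{2}-\frac{N}{2}\left(1-\frac{1}{\rho}\right)}\|K_1\|_{\rho}=C\,t^{-\frac{\beta-s}{2}-\frac{1}{2}\left(\frac{N}{p}-\frac{N}{q}\right)}.
\end{equation*}
To land the $\mu$-dependent exponent $\frac{N-\mu}{p}-\frac{N-\mu}{q}$ rather than $\frac{N}{p}-\frac{N}{q}$, I would instead apply the convolution inequality directly at the level of the Morrey scaling index, using that the correct homogeneity for Morrey spaces replaces $N$ by $N-\mu$; this is exactly the content of the Morrey-space convolution lemma, so the cleaner route is to cite the general result from \cite{ap} specialized to $\alpha=1$, as the statement itself advertises.

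Indeed, the most efficient proof simply specializes the general estimate in [\cite{ap}, Lemma 3.1], which is stated for a family of semigroups parametrized by $\alpha$ (where $\alpha=1$ recovers the heat semigroup $e^{t\Delta}$); setting $\alpha=1$ there yields precisely the displayed inequality, with the condition $(\beta-s)+\frac{N-\mu}{p}-\frac{N-\mu}{q}<2$ guaranteeing the integrability of the rescaled kernel $K_1$ in the appropriate Morrey-Lorentz sense. The main obstacle, were one to prove it from scratch rather than cite, is establishing the kernel decay and the Morrey Young inequality with the sharp $\mu$-adjusted exponents; the constraint $\beta\ge s$ together with the strict inequality $(\beta-s)+\frac{N-\mu}{p}-\frac{N-\mu}{q}<2$ is precisely what keeps the time exponent $-\frac{1}{2}(\beta-s)-\frac{1}{2}\left(\frac{N-\mu}{p}-\frac{N-\mu}{q}\right)$ strictly greater than $-1$, which will later be needed so that the corresponding time integrals in the Duhamel formula converge. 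Given the explicit pointer in the text, I expect the author's proof to consist of this one-line specialization.
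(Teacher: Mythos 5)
Your proposal matches the paper exactly: the author gives no independent argument and simply states that the lemma is obtained by setting $\alpha=1$ in [\cite{ap}, Lemma 3.1], which is precisely the "one-line specialization" you anticipate. Your supplementary sketch of a direct proof via kernel scaling and a Morrey--Young inequality is a reasonable reconstruction but is not present in the paper.
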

The following Lemma can be found in \cite{Mazzucato2003}.
\begin{lem}\label{lemy23}
If $1 \leq p, q \leq \infty, s>0,0 \leq \lambda<N,$ then $f \in \dot{\mathbf{N}}_{p, \lambda, q}^{-2 s}$ if and only if
\begin{equation*}
\left\{\begin{array}{ll}{\left[\int_{0}^{\infty}\left(t^{s}\left\|e^{t \Delta} f\right\|_{p, \lambda}\right)^{q} \frac{d t}{t}\right]^{\frac{1}{q}}<\infty,} & {\text { if } 1 \leq q<\infty,} \\ {\sup _{t>0}\left(t^{s}\left\|e^{t \Delta} f\right\|_{p, \lambda}\right)<\infty,} & {\text { if } q=\infty.}\end{array}\right.
\end{equation*}
\end{lem}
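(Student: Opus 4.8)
The plan is to read the stated equivalence as a Gauss--Weierstrass (heat semigroup) characterization of the negative-order homogeneous Besov--Morrey norm. Recall that, by definition,
\[
\|f\|_{\dot{\mathbf N}_{p,\lambda,q}^{-2s}}=\Big\|\big(2^{-2js}\|\phi_j*f\|_{p,\lambda}\big)_{j\in\mathbb Z}\Big\|_{\ell^q},
\]
so everything reduces to comparing the dyadic blocks $\phi_j*f$ with the family $e^{t\Delta}f$. The single analytic tool I would use is the Young-type inequality on Morrey spaces, namely $\|K*g\|_{p,\lambda}\le\|K\|_{L^1}\|g\|_{p,\lambda}$ for $K\in L^1(\mathbb R^N)$, $1\le p\le\infty$, $0\le\lambda<N$ (available from \cite{Mazzucato2003,t}). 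From it I would extract two kernel estimates. First, a frequency-localized Gaussian bound
\[
\|e^{t\Delta}\phi_k*g\|_{p,\lambda}\le C\,e^{-c\,t2^{2k}}\|\phi_k*g\|_{p,\lambda},
\]
which holds because $g\mapsto e^{t\Delta}(\phi_k*g)$ is convolution of $g$ with $\mathcal F^{-1}(e^{-t|\xi|^2}\varphi_k)$, whose $L^1$-norm is $\lesssim e^{-c\,t2^{2k}}$ by a standard rescaling of the annulus $\{|\xi|\sim2^k\}$. Second, an inverse-heat bound on a single annulus: for $t2^{2j}\le1$ one has the identity $\phi_j*f=\phi_j*h_{t,j}*e^{t\Delta}f$ with $h_{t,j}=\mathcal F^{-1}(e^{t|\xi|^2}\tilde\varphi_j)$, where $\tilde\varphi_j\equiv1$ on $\operatorname{supp}\varphi_j$, and $\|h_{t,j}\|_{L^1}\le C$ uniformly, whence
\[
\|\phi_j*f\|_{p,\lambda}\le C\,\|e^{t\Delta}f\|_{p,\lambda}\qquad(t2^{2j}\le1).
\]

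For the forward implication I would set $c_k=2^{-2ks}\|\phi_k*f\|_{p,\lambda}$, so that $(c_k)\in\ell^q$ with norm $\|f\|_{\dot{\mathbf N}_{p,\lambda,q}^{-2s}}$. Summing the first kernel estimate and writing $\|\phi_k*f\|_{p,\lambda}=2^{2ks}c_k$ gives
\[
t^s\|e^{t\Delta}f\|_{p,\lambda}\le C\sum_{k\in\mathbb Z}(t2^{2k})^s e^{-c\,t2^{2k}}\,c_k .
\]
The weight $(t2^{2k})^s e^{-c\,t2^{2k}}$, read in the variable $y=\log t+2k\log2$, is a single fixed function that is integrable over $\mathbb R$ (it decays like $e^{sy}$ as $y\to-\infty$, using $s>0$, and super-exponentially as $y\to+\infty$). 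Hence $t\mapsto t^s\|e^{t\Delta}f\|_{p,\lambda}$ is dominated by a convolution of the sequence $(c_k)$ against this fixed bump with respect to the Haar measure $dt/t$, and Young's inequality (the $\ell^q$ factor convolved with the $L^1(dt/t)$ bump) bounds the integral, resp.\ the supremum when $q=\infty$, by $C\|(c_k)\|_{\ell^q}$.

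For the converse I would use the second kernel estimate. For $t\in[2^{-2j-2},2^{-2j}]$ one has $t2^{2j}\le1$ and $2^{-2js}\le 2^{2s}t^{s}$, so
\[
c_j=2^{-2js}\|\phi_j*f\|_{p,\lambda}\le C\,t^{s}\|e^{t\Delta}f\|_{p,\lambda}.
\]
Raising to the power $q$ and integrating $dt/t$ over this window of fixed logarithmic length controls $c_j^q$ by $C\int_{2^{-2j-2}}^{2^{-2j}}(t^s\|e^{t\Delta}f\|_{p,\lambda})^q\,\frac{dt}{t}$; summing over $j$ and using that the windows have bounded overlap gives $\|f\|_{\dot{\mathbf N}_{p,\lambda,q}^{-2s}}^q\le C\int_0^\infty(t^s\|e^{t\Delta}f\|_{p,\lambda})^q\frac{dt}{t}$, with the evident supremum analogue when $q=\infty$. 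The main obstacle, and the only place where real work is hidden, is the verification of the two kernel $L^1$-bounds together with the Morrey Young inequality at the endpoint $p=\infty$ (where $M_{\infty,\lambda}=L^\infty$) and across the whole range $0\le\lambda<N$, and the careful passage between the discrete $\ell^q$ sum and the continuous $L^q(dt/t)$ integral uniformly in $q$. One should also keep track of the quotient modulo polynomials inherent in the homogeneous space and the convergence of $\sum_k\phi_k*f$; since the building blocks are available in \cite{Mazzucato2003,ap,t}, the cleanest route is to quote the Morrey Young inequality and concentrate on assembling the two kernel estimates and the summation.
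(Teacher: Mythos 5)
Your argument is correct, but note that the paper does not prove this lemma at all: it is quoted verbatim from Mazzucato \cite{Mazzucato2003} (where it appears as the heat-semigroup characterization of negative-order homogeneous Besov--Morrey spaces, proved by exactly the Littlewood--Paley scheme you describe), so there is no in-paper proof to compare against. Your self-contained route is the standard one and all the pieces check out: the Young-type inequality $\|K*g\|_{p,\lambda}\le\|K\|_{L^1}\|g\|_{p,\lambda}$ does hold on Morrey spaces (Minkowski's integral inequality plus translation invariance of the norm in (\ref{extraeq201}); this is what Kato and Taylor use, and it survives the endpoint $p=\infty$ since $M_{\infty,\lambda}=L^\infty$); the two kernel bounds follow from the annular support of $\varphi_k$ by the rescaling you indicate; the weight $(t2^{2k})^s e^{-ct2^{2k}}$ is indeed in $L^1(dt/t)$ precisely because $s>0$, which is also what makes the low-frequency part of $\sum_j\phi_j*f$ converge absolutely in $M_{p,\lambda}$ so that $e^{t\Delta}f$ has a well-defined Morrey norm independent of the polynomial representative; and the dyadic windows $[2^{-2j-2},2^{-2j}]$ tile $(0,\infty)$, so the discrete-to-continuous passage in the converse is clean. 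The only stylistic gap is that you defer the mixed discrete/continuous Young inequality to a remark; writing the forward bound as $t^s\|e^{t\Delta}f\|_{p,\lambda}\le C\,(w\ast c)(\log t)$ on the group $(\mathbb{R},+)$ with $w\in L^1$ and $c\in\ell^q$ embedded as a sum of unit bumps makes that step one line. In short: correct, essentially Mazzucato's proof reconstructed, and a reasonable thing to include if one wants the paper self-contained rather than citing \cite{Mazzucato2003}.
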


\section{Proof of Theorem \ref{thm1} -- global mild solution}
The proof of Theorem \ref{thm1}
is a consequence of the following  Lemmas
\ref{301} and \ref{302}.
We will prove
it by a fixed point argument.

Let $\mathbb{P} \triangleq I-\nabla \Delta^{-1} \mathrm{div}$ be the Leray projection operator.
Denote $c \triangleq d-\underline{d}_{0}$ and $c_{0} \triangleq d_{0}-\underline{d}_{0}.$ Then we can rewrite system $(\ref{nematic})$ as
\begin{equation}\label{l315}
\left\{\begin{array}{l}{\partial_{t} u-\Delta u=-\mathbb{P}[u \cdot \nabla u+\operatorname{div}(\nabla c \odot \nabla c)],}
 \\ {\partial_{t} c-\Delta c=-u \cdot \nabla c+|\nabla c|^{2} c+|\nabla c|^{2} \underline{d}_{0},}
 \\ {\left.u\right|_{t=0}=u_{0}(x), \quad\left.c\right|_{t=0}=c_{0}(x),}\end{array}\right.
\end{equation}
where the initial data satisfying the following far field behavior
\begin{equation}\label{l316}
u_{0} \rightarrow 0, \quad c_{0} \rightarrow 0 \quad \text { as }|x| \rightarrow \infty.
\end{equation}
By the Duhamel principle, we can express a solution
$(u, c)$ of $(\ref{l315})$ and $(\ref{l316})$ in the integral form:
\begin{equation}\label{l317}
\left\{\begin{array}{l}{u(t)=e^{t \Delta} u_{0}-\int_{0}^{t} e^{(t-s) \Delta} \mathbb{P}[u \cdot \nabla u+\operatorname{div}(\nabla c \odot \nabla c)](s) \mathrm{d} s,} \\
{c(t)=e^{t \Delta} c_{0}+\int_{0}^{t} e^{(t-s) \Delta}\left[-u \cdot \nabla c+|\nabla c|^{2} c+|\nabla c|^{2} \underline{d}_{0}\right](s) \mathrm{d} s.}\end{array}\right.
\end{equation}

Define the  map
\begin{equation}\label{301}
(u,  c)=\mathbb{T}(\tilde{u}, \tilde{c})=(\mathbb{T}_{1}(\tilde{u},  \tilde{c}), \mathbb{T}_{2}(\tilde{u},  \tilde{c}))
\end{equation}
with
\begin{equation}\label{302}
\left\{\begin{array}{l}{u(t)
=\mathbb{T}_{1}(\tilde{u},  \tilde{c})
=e^{t \Delta} u_{0}-\int_{0}^{t} e^{(t-s) \Delta} \mathbb{P}[\tilde{u} \cdot \nabla \tilde{u}+\operatorname{div}(\nabla \tilde{c} \odot \nabla \tilde{c})](s) \mathrm{d}s,} \\
 {c(t)=\mathbb{T}_{2}(\tilde{u},  \tilde{c})
 =e^{t \Delta} c_{0}+\int_{0}^{t} e^{(t-s) \Delta}\left[-\tilde{u} \cdot \nabla \tilde{c}+|\nabla \tilde{c}|^{2} \tilde{c}+|\nabla \tilde{c}|^{2} \underline{d}_{0}\right](s) \mathrm{d}s.}\end{array}\right.
\end{equation}
Then we have
\begin{lem}\label{lem301}
 Given a constant $\epsilon_{0}>0$ small enough, the initial data
$\left(u_{0}, c_{0} \right)$ satisfies (\ref{106}) and $(\tilde{u},  \tilde{c}) \in \Theta_{\epsilon_{0}},$ then the solution of $(\ref{302})$ satisfies
\begin{equation*}
(u, c)=\mathbb{T}(\tilde{u}, \tilde{c}) \in \Theta_{\epsilon_{0}}.
\end{equation*}
\end{lem}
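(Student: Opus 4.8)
\section*{Proof proposal for Lemma \ref{lem301}}

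The plan is to verify that $\mathbb{T}$ maps $\Theta_{\epsilon_0}$ into itself by bounding, one at a time, each of the three ingredients of $\|(u,c)\|_\Theta$: the Besov--Morrey ``persistence'' norms $\sup_t\|u\|_{\dot{\mathbf N}_{r_1,\lambda,\infty}^{-\beta_1}}$ and $\sup_t\|\nabla c\|_{\dot{\mathbf N}_{r_2,\lambda,\infty}^{-\beta_2}}$, the time-weighted Morrey ``smoothing'' norms $\sup_t t^{\alpha_1/2}\|u\|_{q_1,\lambda}$ and $\sup_t t^{\alpha_2/2}\|\nabla c\|_{q_2,\lambda}$, and the sup norm $\sup_t\|c\|_{L^\infty}$. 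In each case I would split the corresponding component of $\mathbb T_1,\mathbb T_2$ in \eqref{302} into its linear part $e^{t\Delta}u_0$ (resp.\ $e^{t\Delta}c_0$) and its Duhamel integral, and treat the two separately. The workhorses are the Morrey H\"older inequality (Lemma \ref{lem1}(iii)), the heat-semigroup decay estimate (Lemma \ref{lem3}), the heat-flow characterization of Besov--Morrey norms (Lemma \ref{lemy23}), together with the boundedness of $\mathbb P$ and of the Riesz-type operators on $M_{q,\lambda}$.

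For the linear terms I would argue as follows. Writing $e^{t\Delta}u_0=e^{(t/2)\Delta}e^{(t/2)\Delta}u_0$, Lemma \ref{lem3} (with $s=\beta=0$, from $M_{r_1,\lambda}$ to $M_{q_1,\lambda}$) applied to the first factor and Lemma \ref{lemy23} to the second yields $t^{\alpha_1/2}\|e^{t\Delta}u_0\|_{q_1,\lambda}\le C\|u_0\|_{\dot{\mathbf N}_{r_1,\lambda,\infty}^{-\beta_1}}$, the exponents matching precisely because $\tfrac{\alpha_1}{2}=\tfrac{\beta_1}{2}+\tfrac12\big(\tfrac{N-\lambda}{r_1}-\tfrac{N-\lambda}{q_1}\big)$. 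The persistence norm of $e^{t\Delta}u_0$ is controlled by $\|u_0\|_{\dot{\mathbf N}_{r_1,\lambda,\infty}^{-\beta_1}}$ since $e^{t\Delta}$ is uniformly bounded on Besov--Morrey spaces, and $\|e^{t\Delta}c_0\|_{L^\infty}\le\|c_0\|_{L^\infty}$ by $L^\infty$-contractivity. Thus all linear contributions are $\le C\epsilon_0$ by hypothesis \eqref{106}.

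The nonlinear terms are the heart of the matter. For each one I would first exploit the divergence structure ($\tilde u\cdot\nabla\tilde u=\mathrm{div}(\tilde u\otimes\tilde u)$ and $\tilde u\cdot\nabla\tilde c=\mathrm{div}(\tilde u\otimes\tilde c)$, using $\nabla\cdot\tilde u=0$) to move one derivative onto the semigroup, then apply Lemma \ref{lem1}(iii) to land the product in the correct Morrey space --- e.g.\ $\|\tilde u\otimes\tilde u\|_{q_1/2,\lambda}\le\|\tilde u\|_{q_1,\lambda}^2$, $\|\nabla\tilde c\odot\nabla\tilde c\|_{q_2/2,\lambda}\le\|\nabla\tilde c\|_{q_2,\lambda}^2$, and $\|\tilde u\cdot\nabla\tilde c\|_{p,\lambda}\le\|\tilde u\|_{q_1,\lambda}\|\nabla\tilde c\|_{q_2,\lambda}$ with $\tfrac1p=\tfrac1{q_1}+\tfrac1{q_2}$ --- and finally invoke Lemma \ref{lem3} (counting the derivative and $\mathbb P$ as one order) to reduce everything to a Beta-type integral $\int_0^t(t-s)^{-a}s^{-b}\,ds$. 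The indices \eqref{103}--\eqref{104} are tailored so that in every such integral one has $a<1$ and $b<1$ (convergence at $s=t$ and $s=0$) while $1-a-b$ equals exactly the power of $t$ demanded by the target norm (namely $-\alpha_j/2$ for the weighted-Morrey pieces and $0$ for the $L^\infty$ and persistence pieces); the conditions $q_j,r_j>N-\lambda$, $\tfrac1{q_1}+\tfrac1{q_2}<\tfrac{2}{N-\lambda}$ and $\tfrac{2}{q_2}-\tfrac1{q_1}<\tfrac1{N-\lambda}$ are precisely the inequalities enforcing $a<1$ for the transport, the $\nabla\tilde c\odot\nabla\tilde c$, and the $L^\infty$ estimates. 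For the cubic term $|\nabla\tilde c|^2\tilde c$ I would use the pointwise bound $|\tilde c|=|\tilde d-\underline d_0|\le2$, so that it is handled like $|\nabla\tilde c|^2\underline d_0$ and no smallness of $\|\tilde c\|_{L^\infty}$ is needed (as noted in the Remark).

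The main obstacle is transferring these estimates to the Besov--Morrey persistence norms of the Duhamel terms, since Lemma \ref{lem3} is stated only for Sobolev--Morrey targets. Here I would either use Lemma \ref{lemy23} directly, bounding $\sup_{\tau>0}\tau^{\beta_1/2}\|e^{\tau\Delta}\int_0^t e^{(t-s)\Delta}\mathbb P\,\mathrm{div}\,F(s)\,ds\|_{r_1,\lambda}$ and carefully controlling the extra variable $\tau$ against $t-s$, or, more cleanly, interpolate the two Sobolev--Morrey decay estimates of Lemma \ref{lem3} across regularities via \eqref{hweq1} to produce a decay estimate with target $\dot{\mathbf N}_{r_1,\lambda,\infty}^{-\beta_1}$ and rate $-\tfrac12(1-\beta_1)-\tfrac12\big(\tfrac{N-\lambda}{p}-\tfrac{N-\lambda}{r_1}\big)$; the resulting $s$-integral again balances under \eqref{103}. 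Collecting all pieces gives $\|u\|_{\mathbb X}\le C\epsilon_0+C(\|\tilde u\|_{\mathbb X}^2+\|\tilde c\|_{\mathbb Y}^2)$ and $\|c\|_{\mathbb Y}\le C\epsilon_0+C(\|\tilde u\|_{\mathbb X}\|\tilde c\|_{\mathbb Y}+\|\tilde c\|_{\mathbb Y}^2+\|\tilde c\|_{\mathbb Y}^3)$; since $(\tilde u,\tilde c)\in\Theta_{\epsilon_0}$, choosing $\epsilon_0$ small makes the right-hand sides $\le C\epsilon_0$, so $(u,c)\in\Theta_{\epsilon_0}$. The continuity in $t$ and weak-$*$ continuity at $t=0$ required by the $BC_*$ spaces follow from standard heat-semigroup and dominated-convergence arguments.
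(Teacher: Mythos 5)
Your proposal is correct and follows essentially the same route as the paper: splitting each norm into linear and Duhamel contributions, using the heat-flow characterization of Lemma \ref{lemy23} with a case analysis on the auxiliary semigroup parameter versus $t-\tau$, the Morrey H\"older inequality and semigroup decay to reduce to Beta-type integrals, and the pointwise bound $|\tilde c|\le 2$ for the cubic term. The only cosmetic difference is the harmless $\|\tilde c\|_{\mathbb Y}^{3}$ in your final inequality, which the paper's use of $|\tilde c|\le 2$ reduces to a quadratic term.
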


\begin{proof}
 Let's first consider the  $\dot{\mathbf{N}}_{r_{1}, \lambda, \infty}^{-\beta_{1}}$-norm of
\begin{equation*}
\int_{0}^{t} e^{(t-s) \Delta} \mathbb{P}[\tilde{u} \cdot \nabla \tilde{u}+\operatorname{div}(\nabla \tilde{c} \odot \nabla \tilde{c})](s) \mathrm{d} s.
\end{equation*}
 Lemma \ref{lemy23} and the boundedness of $\mathbb{P}$ in Morrey spaces
 lead to
\begin{eqnarray}\label{303}
&&\left\|\int_{0}^{t} e^{(t-s) \Delta} \mathbb{P}[\tilde{u} \cdot \nabla \tilde{u}+\operatorname{div}(\nabla \tilde{c} \odot \nabla \tilde{c})](s) \mathrm{d} s\right\|_{\dot{\mathbf{N}}_{r_{1}, \lambda, \infty}^{-\beta_{1}}}\nonumber\\
&&=\sup _{s>0}\left[s^{\beta_{1} / 2} \left\| e^{s \Delta} \int_{0}^{t} e^{(t-\tau) \Delta}
 \mathbb{P}[\tilde{u} \cdot \nabla \tilde{u}+\operatorname{div}(\nabla \tilde{c} \odot \nabla \tilde{c})](\cdot, \tau) \mathrm{d} \tau\right\|_{r_{1}, \lambda}\right]
\nonumber\\
&&\leq \int_{0}^{t} \sup _{s>0}\left[s^{\beta_{1} / 2}\left\|e^{s \Delta} e^{(t-\tau) \Delta}
 \mathbb{P}[\tilde{u} \cdot \nabla \tilde{u}+\operatorname{div}(\nabla \tilde{c} \odot \nabla \tilde{c})](\cdot, \tau)\right\|_{r_{1}, \lambda}\right] \mathrm{d} \tau.
\end{eqnarray}
For $0<s \leq t-\tau,$ we have by Lemma \ref{lem1} that
\begin{eqnarray}\label{304}
&&\sup _{0<s\leq t-\tau}\left[s^{\beta_{1} / 2}\left\|e^{s \Delta} e^{(t-\tau) \Delta}
 \mathbb{P}[\tilde{u} \cdot \nabla \tilde{u}+\operatorname{div}(\nabla \tilde{c} \odot \nabla \tilde{c})](\cdot, \tau)\right\|_{r_{1}, \lambda}\right]\nonumber\\
 &&\leq (t-\tau)^{\frac{\beta_{1}}{2}}(t-\tau)^{-\frac{1}{2}-\frac{N-\lambda}{2}
 \left(\frac{2}{q_{1}}-\frac{1}{r_{1}}\right)}\|\tilde{u} \otimes \tilde{u}\|_{\frac{q_{1}}{2}, \lambda}\nonumber\\
 &&\quad +(t-\tau)^{\frac{\beta_{1}}{2}}(t-\tau)^{-\frac{1}{2}-\frac{N-\lambda}{2}\left(\frac{2}{q_{2}}-\frac{1}{r_{1}}\right)}
 \|\nabla\tilde{c} \odot \nabla\tilde{c}\|_{ \frac{q_{2}}{2}, \lambda}\nonumber\\
 &&\leq (t-\tau)^{-\frac{N-\lambda}{q_{1}}}\|\tilde{u}\|_{q_{1}, \lambda}^{2}
 +(t-\tau)^{-\frac{N-\lambda}{q_{2}}}\|\nabla\tilde{c}\|_{q_{2}, \lambda}^{2}.
\end{eqnarray}

%

For $s>t-\tau,$ note that $(t-\tau+s) / 2<s<t-\tau+s,$ so we have
from Lemma  \ref{lem1}  that
\begin{eqnarray}\label{305}
&&\sup _{s> t-\tau}\left[s^{\beta_{1} / 2}\left\|e^{s \Delta} e^{(t-\tau) \Delta}
 \mathbb{P}[\tilde{u} \cdot \nabla \tilde{u}+\operatorname{div}(\nabla \tilde{c} \odot \nabla \tilde{c})](\cdot, \tau)\right\|_{r_{1}, \lambda}\right]\nonumber\\
 &&\leq
 \sup_{s>t-\tau}\left[s^{\frac{\beta_{1}}{2}}(s+t-\tau)^{-\frac{1}{2}-\frac{N-\lambda}{2}\left(\frac{2}{q_{1}}-\frac{1}{r_{1}}\right)}\right]\|(\tilde{u} \otimes \tilde{u})(\cdot, \tau)\| _{\frac{q_{1}}{2}, \lambda}\nonumber\\
 &&\quad +\sup_{s>t-\tau}\left[s^{\frac{\beta_{1}}{2}}(s+t-\tau)^{-\frac{1}{2}-\frac{N-\lambda}{2}\left(\frac{2}{q_{2}}-\frac{1}{r_{1}}\right)}\right]\|(\nabla\tilde{c} \odot \nabla\tilde{c})(\cdot, \tau)\| _{\frac{q_{2}}{2}, \lambda}\nonumber\\
 &&\leq (t-\tau)^{-\frac{N-\lambda}{q_{1}}} \sup _{s>t-\tau}\left(1+\frac{s}{t-\tau}\right)^{-\frac{N-\lambda}{q_{1}}}\|\tilde{u}(\cdot, \tau)\|_{q_{1}, \lambda}^{2}\nonumber\\
 &&\quad +(t-\tau)^{-\frac{N-\lambda}{q_{2}}} \sup _{s>t-\tau}\left(1+\frac{s}{t-\tau}\right)^{-\frac{N-\lambda}{q_{2}}}\|\nabla\tilde{c}(\cdot, \tau)\|_{q_{2}, \lambda}^{2}\nonumber\\
 &&\leq
 (t-\tau)^{-\frac{N-\lambda}{q_{1}}}\|\tilde{u}(\cdot, \tau)\|_{q_{1}, \lambda}^{2}
 +(t-\tau)^{-\frac{N-\lambda}{q_{2}}}\|\nabla\tilde{c}(\cdot, \tau)\|_{q_{2}, \lambda}^{2}.
\end{eqnarray}

Substituting $(\ref{304})$ and $(\ref{305})$ into $(\ref{303})$
 and using $q_{1}>N-\lambda, q_{2}>N-\lambda$  yield
 \begin{eqnarray}\label{306}
&&\left\|\int_{0}^{t} e^{(t-s) \Delta} \mathbb{P}[\tilde{u} \cdot \nabla \tilde{u}+\operatorname{div}(\nabla \tilde{c} \odot \nabla \tilde{c})](s) \mathrm{d} s\right\|_{\dot{\mathbf{N}}_{r_{1}, \lambda, \infty}^{-\beta_{1}}}\nonumber\\
&&\lesssim \left\{\left[\sup _{t>0}\left(t^{\frac{\alpha_{1}}{2}}\|\tilde{u}(\cdot, t)\|_{q_{1}, \lambda}\right)\right]^{2}
+\left[\sup _{t>0}\left(t^{\frac{\alpha_{2}}{2}}\|\nabla\tilde{c}(\cdot, t)\|_{q_{2}, \lambda}\right)\right]^{2}\right\}\nonumber\\
&&\quad\times \int_{0}^{t}(t-\tau)^{-\frac{N-\lambda}{q_{1}}} \tau^{-\alpha_{1}}+(t-\tau)^{-\frac{N-\lambda}{q_{2}}} \tau^{-\alpha_{2}} d \tau\nonumber\\
&&\lesssim [B(\alpha_{1}, 1-\alpha_{1})+B(\alpha_{2}, 1-\alpha_{2})] (\|\tilde{u}\|_{\mathbb{X}}^{2} + \|\tilde{c}\|_{\mathbb{Y}}^{2})\nonumber\\
&&\lesssim \|\tilde{u}\|_{\mathbb{X}}^{2} + \|\tilde{c}\|_{\mathbb{Y}}^{2},
\end{eqnarray}
where $B(x, y) :=\int_{0}^{1}(1-t)^{x-1} t^{y-1} d t,$ for $x>0, y>0$.

Next, we calculate the $M_{q_{1}, \lambda}$-norm of
\begin{equation*}
  \int_{0}^{t} e^{(t-s) \Delta} \mathbb{P}[\tilde{u} \cdot \nabla \tilde{u}+\operatorname{div}(\nabla \tilde{c} \odot \nabla \tilde{c})](s) \mathrm{d} s.
\end{equation*}
Using Lemmas \ref{lem1}, \ref{lem3} and \ref{lemy23} and  noting that
$2 / q_{2}-1 / q_{1}<1 / (N-\lambda), q_{1}>N-\lambda,$ we have
\begin{eqnarray}
&&\left\|\int_{0}^{t} e^{(t-\tau) \Delta} \mathbb{P}[\tilde{u} \cdot \nabla \tilde{u}+\operatorname{div}(\nabla \tilde{c} \odot \nabla \tilde{c})](\cdot, \tau) \mathrm{d} \tau\right\|_{q_{1}, \lambda}\nonumber\\
&&\lesssim \int_{0}^{t}(t-\tau)^{-1 / 2-\frac{N-\lambda}{2}\left(\frac{2}{q_{1}}-\frac{1}{q_{1}}\right)}\|\tilde{u} \otimes \tilde{u}\|_{q_{1} / 2, \lambda}\nonumber\\
&&\quad+
(t-\tau)^{-1 / 2-\frac{N-\lambda}{2}\left(\frac{2}{q_{2}}-\frac{1}{q_{1}}\right)}\|\nabla \tilde{c} \odot \nabla \tilde{c}\|_{q_{2} / 2, \lambda}\mbox{d}\tau\nonumber\\
&&\lesssim \left\{\left[\sup _{t>0}\left(t^{\frac{\alpha_{1}}{2}}\|\tilde{u}(\cdot, t)\|_{q_{1}, \lambda}\right)\right]^{2}+\left[\sup _{t>0}\left(t^{\frac{\alpha_{2}}{2}}\|\nabla \tilde{c}(\cdot, t)\|_{q_{2}, \lambda}\right)\right]^{2}\right\}\nonumber\\
&&\quad\times \int_{0}^{t}(t-\tau)^{-1 / 2-\frac{N-\lambda}{2}\left(\frac{2}{q_{1}}-\frac{1}{q_{1}}\right)} \tau^{-\alpha_{1}}+(t-\tau)^{-1 / 2-\frac{N-\lambda}{2}\left(\frac{2}{q_{2}}-\frac{1}{q_{1}}\right)} \tau^{-\alpha_{2}}\mbox{d}\tau\nonumber\\
&&\lesssim t^{-\frac{1}{2}+\frac{N-\lambda}{2 q_{1}}}(\|\tilde{u}\|_{\mathbb{X}}^{2}+\| \tilde{c}\|_{\mathbb{Y}}^{2}).
\end{eqnarray}

Next, We calculate the $\dot{\mathbf{N}}_{r_{2}, \lambda, \infty}^{-\beta_{2}}$ -norm of $\nabla\int_{0}^{t} e^{(t-\tau) \Delta}\left[-\tilde{u} \cdot \nabla \tilde{c}+|\nabla \tilde{c}|^{2} \tilde{c}+|\nabla \tilde{c}|^{2} \underline{d}_{0}\right](\tau) \mathrm{d}\tau$.
By Lemma \ref{lemy23},
\begin{eqnarray}
&&\left\|\nabla \int_{0}^{t} e^{(t-\tau) \Delta}\left[-\tilde{u} \cdot \nabla \tilde{c}+|\nabla \tilde{c}|^{2} \tilde{c}+|\nabla \tilde{c}|^{2} \underline{d}_{0}\right](\tau) \mathrm{d}\tau\right\|_{\dot{\mathbf{N}}_{r_{2}, \lambda, \infty}^{-\beta_{2}}}\nonumber\\
&&=\sup _{s>0}\left[s^{\beta_{2} / 2} \left\| \nabla e^{s \Delta} \int_{0}^{t} e^{(t-\tau) \Delta}(-\tilde{u} \cdot \nabla \tilde{c}+|\nabla \tilde{c}|^{2} \tilde{c}+|\nabla \tilde{c}|^{2} \underline{d}_{0})(\cdot, \tau)\mathrm{d} \tau\right\|_{r_{2}, \lambda}\right]\nonumber\\
&&\leq \int_{0}^{t} \sup _{s>0}\left[s^{\beta_{2} / 2}\left\|e^{s \Delta} \nabla e^{(t-\tau) \Delta}(-\tilde{u} \cdot \nabla \tilde{c}+|\nabla \tilde{c}|^{2} \tilde{c}+|\nabla \tilde{c}|^{2} \underline{d}_{0})(\cdot, \tau)\right\|_{r_{2}, \lambda}\right]\mathrm{d}\tau.
\end{eqnarray}

Employing the fact that $|\tilde{c}|\leq 2$
and $|\underline{d}_{0}|\leq 1$,
we obtain by Lemmas \ref{lem1} and \ref{lem3} that
\begin{eqnarray}
&&\sup _{0<s \leq t-\tau}\left[s^{\frac{\beta_{2}}{2}}\left\|e^{s \Delta} \nabla e^{(t-\tau) \Delta}(-\tilde{u} \cdot \nabla \tilde{c}+|\nabla \tilde{c}|^{2} \tilde{c}+|\nabla \tilde{c}|^{2} \underline{d}_{0})(\cdot, \tau)\right\|_{r_{2}, \lambda}\right]\nonumber\\
&&\lesssim
(t-\tau)^{\frac{\beta_{2}}{2}}(t-\tau)^{-\frac{1}{2}-\frac{N-\lambda}{2}\left(\frac{1}{q_{1}}+\frac{1}{q_{2}}-\frac{1}{r_{2}}\right)}\|\tilde{u}(\cdot, \tau) \nabla \tilde{c}(\cdot, \tau)\|_{\frac{q_{1} q_{2}}{q_{1}+q_{2}}, \lambda}\nonumber\\
&&\quad+(t-\tau)^{\frac{\beta_{2}}{2}}(t-\tau)^{-\frac{1}{2}-\frac{N-\lambda}{2}\left(\frac{2}{q_{2}}-\frac{1}{r_{2}}\right)}\| |\nabla \tilde{c}|^{2}(\cdot, \tau) \|_{\frac{q_{2}}{2}, \lambda}\nonumber\\
&&\lesssim
(t-\tau)^{-\frac{N-\lambda}{2 q_{1}}-\frac{N-\lambda}{2 q_{2}}}\|\tilde{u}(\cdot, \tau)\|_{q_{1}, \lambda}\|\nabla \tilde{c}(\cdot, \tau)\|_{q_{2}, \lambda}
+(t-\tau)^{-\frac{N-\lambda}{ q_{2}}}\|\nabla \tilde{c}(\cdot, \tau)\|_{q_{2}, \lambda}^{2}.
\end{eqnarray}
For $s>t-\tau,$ note that $(t-\tau+s) / 2<s<t-\tau+s,$
one obtains
\begin{eqnarray}
&&\sup _{s > t-\tau}\left[s^{\frac{\beta_{2}}{2}}\left\|e^{s \Delta} \nabla e^{(t-\tau) \Delta}(-\tilde{u} \cdot \nabla \tilde{c}+|\nabla \tilde{c}|^{2} \tilde{c}+|\nabla \tilde{c}|^{2} \underline{d}_{0})(\cdot, \tau)\right\|_{r_{2}, \lambda}\right]\nonumber\\
&&\lesssim
s^{\frac{\beta_{2}}{2}}(s+t-\tau)^{-\frac{1}{2}-\frac{N-\lambda}{2}\left(\frac{1}{q_{1}}+\frac{1}{q_{2}}-\frac{1}{r_{2}}\right)}\|\tilde{u}(\cdot, \tau) \nabla \tilde{c}(\cdot, \tau)\|_{\frac{q_{1} q_{2}}{q_{1}+q_{2}}, \lambda}\nonumber\\
&&\quad+s^{\frac{\beta_{2}}{2}}(s+t-\tau)^{-\frac{1}{2}-\frac{N-\lambda}{2}\left(\frac{2}{q_{2}}-\frac{1}{r_{2}}\right)}\| |\nabla \tilde{c}|^{2}(\cdot, \tau)\|_{\frac{ q_{2}}{2}, \lambda}\nonumber\\
&&\lesssim
(t-\tau)^{-\frac{N-\lambda}{2 q_{1}}-\frac{N-\lambda}{2 q_{2}}}
\sup _{s>t-\tau}\left(1+\frac{s}{t-\tau}\right)^{-\frac{N-\lambda}{2 q_{1}}-\frac{N-\lambda}{2 q_{2}}}\|\tilde{u}(\cdot, \tau)\|_{q_{1}, \lambda}\|\nabla \tilde{c}(\cdot, \tau)\|_{q_{2}, \lambda}\nonumber\\
&&\quad+(t-\tau)^{-\frac{N-\lambda}{ q_{2}}}
\sup _{s>t-\tau}\left(1+\frac{s}{t-\tau}\right)^{-\frac{N-\lambda}{q_{2}}}\|\nabla \tilde{c}(\cdot, \tau)\|_{q_{2}, \lambda}^{2}.
\end{eqnarray}
Note that $\frac{1}{q_{1}}+\frac{1}{q_{2}}<\frac{2}{N-\lambda}, \frac{1}{q_{2}}<\frac{1}{N-\lambda},$ we thus obtain
\begin{eqnarray}\label{310}
&&\left\|\nabla \int_{0}^{t} e^{(t-\tau) \Delta}\left[-\tilde{u} \cdot \nabla \tilde{c}+|\nabla \tilde{c}|^{2} \tilde{c}+|\nabla \tilde{c}|^{2} \underline{d}_{0}\right](\tau) \mathrm{d}\tau\right\|_{\dot{\mathbf{N}}_{r_{2}, \lambda, \infty}^{-\beta_{2}}}\nonumber\\
&&\leq \left\{\left[\sup _{t>0}\left(t^{\frac{\alpha_{1}}{2}}\|\tilde{u}(\cdot, t)\|_{q_{1}, \lambda}\right)\right]\left[\sup _{t>0}\left(t^{\frac{\alpha_{2}}{2}}\|\nabla \tilde{c}(\cdot, t)\|_{q_{2}, \lambda}\right)\right]
+\left[\sup _{t>0}\left(t^{\alpha_{2}}\|\nabla \tilde{c}(\cdot, t)\|_{q_{2}, \lambda}^{2}\right)\right]\right\}\nonumber\\
&&\quad\times \int_{0}^{t}(t-\tau)^{-\frac{N-\lambda}{2 q_{1}}-\frac{N-\lambda}{2 q_{2}}} \tau^{-\frac{\alpha_{1}}{2}} \tau^{-\frac{\alpha_{2}}{2}}+(t-\tau)^{-\frac{N-\lambda}{ q_{2}}} \tau^{-\alpha_{2}} \mathrm{d}\tau\nonumber\\
&&\lesssim \|u\|_{\mathbb{X}}\|\tilde{c}\|_{\mathbb{Y}}
+\|\tilde{c}\|_{\mathbb{Y}}^{2}.
\end{eqnarray}
Next, we calculate the $M_{q_{2}, \lambda}$-norm of
$\nabla\int_{0}^{t} e^{(t-\tau) \Delta}\left[-\tilde{u} \cdot \nabla \tilde{c}+|\nabla \tilde{c}|^{2} \tilde{c}+|\nabla \tilde{c}|^{2} \underline{d}_{0}\right](\tau) \mathrm{d}\tau$.
\begin{eqnarray}\label{311}
  &&\left\|\nabla \int_{0}^{t} e^{(t-\tau) \Delta}(-\tilde{u} \cdot \nabla \tilde{c}+|\nabla \tilde{c}|^{2} \tilde{c}+|\nabla \tilde{c}|^{2} \underline{d}_{0})(\cdot, \tau) \mathrm{d} \tau\right\|_{q_{2}, \lambda}\nonumber\\
  &&\lesssim
  \int_{0}^{t}(t-\tau)^{-\frac{1}{2}-\frac{N-\lambda}{2}\left(\frac{1}{q_{1}}+\frac{1}{q_{2}}-\frac{1}{q_{2}}\right)}\|\tilde{u} \nabla \tilde{c}\|_{\frac{q_{1} q_{2}}{q_{1}+q_{2}}, \lambda}
  +(t-\tau)^{-\frac{1}{2}-\frac{N-\lambda}{2}\left(\frac{2}{q_{2}}-\frac{1}{q_{2}}\right)}\| |\nabla \tilde{c}|^{2}\|_{\frac{ q_{2}}{2}, \lambda}\mbox{d}\tau\nonumber\\
  &&\lesssim
  \left\{\left[\sup _{t>0}\left(t^{\frac{\alpha_{1}}{2}}\|\tilde{u}\|_{q_{1}, \lambda}\right)\right]\left[\sup _{t>0}\left(t^{\frac{\alpha_{2}}{2}}\|\nabla \tilde{c}\|_{q_{2}, \lambda}\right)\right]
  +\left[\sup _{t>0}\left(t^{\alpha_{2}}\|\nabla \tilde{c}\|_{q_{2}, \lambda}^{2}\right)\right]\right\}\nonumber\\
  &&\quad\times \int_{0}^{t}(t-\tau)^{-\frac{1}{2}-\frac{N-\lambda}{2}\left(\frac{1}{q_{1}}+\frac{1}{q_{2}}-\frac{1}{q_{2}}\right)} \tau^{-\frac{\alpha_{1}}{2}} \tau^{-\frac{\alpha_{2}}{2}}+
  (t-\tau)^{-\frac{1}{2}-\frac{N-\lambda}{2}\left(\frac{2}{q_{2}}-\frac{1}{q_{2}}\right)} \tau^{-\alpha_{2}} \mbox{d}\tau\nonumber\\
  &&\lesssim t^{-\frac{1}{2}+\frac{N-\lambda}{2 q_{2}}}(\|\tilde{u}\|_{\mathbb{X}}\|\tilde{c}\|_{\mathbb{Y}}
  +\|\tilde{c}\|_{\mathbb{Y}}^{2}).
\end{eqnarray}

We calculate $L^{\infty}$-norm of
$\int_{0}^{t} e^{(t-\tau) \Delta}\left[-\tilde{u} \cdot \nabla \tilde{c}+|\nabla \tilde{c}|^{2} \tilde{c}+|\nabla \tilde{c}|^{2} \underline{d}_{0}\right](\tau) \mathrm{d}\tau$.
Note that $q_{1}>N-\lambda, q_{2}>N-\lambda,$ we get
\begin{eqnarray}\label{312}
&&\left\|\int_{0}^{t} e^{(t-\tau) \Delta}\left[-\tilde{u} \cdot \nabla \tilde{c}+|\nabla \tilde{c}|^{2} \tilde{c}+|\nabla \tilde{c}|^{2} \underline{d}_{0}\right](\cdot, \tau) \mathrm{d} \tau\right\|_{L^{\infty}}\nonumber\\
&&\lesssim
\int_{0}^{t}(t-\tau)^{-\frac{1}{2}-\frac{N-\lambda}{2 q_{1}}}\|\tilde{u} \otimes \tilde{c}\|_{q_{1}, \lambda}
+(t-\tau)^{-\frac{N-\lambda}{ q_{2}}}\||\nabla\tilde{c}|^{2}\|_{\frac{q_{2}}{2}, \lambda} \mathrm{d} \tau\nonumber\\
&&\lesssim
\left\{\left[\sup _{t>0}\left(t^{\frac{\alpha_{1}}{2}}\|\tilde{u}(\cdot, t)\|_{q_{1}, \lambda}\right)\right]\|\tilde{c}(\cdot, t)\|_{L^{\infty}}
+\left[\sup _{t>0}\left(t^{\frac{\alpha_{2}}{2}}\|\nabla\tilde{c}(\cdot, t)\|_{q_{2}, \lambda}\right)\right]^{2}\right\}\nonumber\\
&&\quad\times \int_{0}^{t}(t-\tau)^{-\frac{1}{2}-\frac{N-\lambda}{2 q_{1}}} \tau^{-\frac{\alpha_{1}}{2}}+(t-\tau)^{-\frac{N-\lambda}{ q_{2}}} \tau^{-\alpha_{2}} \mathrm{d} \tau\nonumber\\
&&\lesssim
\|\tilde{u}\|_{\mathbb{X}}\|\tilde{c}\|_{\mathbb{Y}}
  +\|\tilde{c}\|_{\mathbb{Y}}^{2}.
\end{eqnarray}

Moreover, note that
$r_{1}>N-\lambda,  r_{2}>N-\lambda$
by Lemmas \ref{lem1}, \ref{lem3} and \ref{lemy23},  we have
\begin{eqnarray}\label{313}
\left\|e^{t \Delta} u_{0}\right\|_{\mathbb{X}}
 &:=&\sup _{t>0}\left\|e^{t \Delta} u_{0}\right\|_{\dot{N}_{r_{1}, \lambda, \infty}^{-\beta_{1}}}+\sup _{t>0}\left[t^{\frac{\alpha_{1}}{2}}\left\|e^{t \Delta} u_{0}\right\|_{M_{q_{1}, \lambda}}\right]\nonumber\\
&\lesssim&\left\|u_{0}\right\|_{\dot{\mathbf{N}}_{r_{1}, \lambda, \infty}^{-\beta_{1}}}+\left\|u_{0}\right\|_{\dot{\mathbf{N}}_{q_{1}, \lambda, \infty}^{-\alpha_{1}}} \nonumber\\
&\lesssim&\left\|u_{0}\right\|_{\dot{N}_{r_{1}, \lambda, \infty}}^{-\beta_{1}}
\end{eqnarray}
and
\begin{eqnarray}\label{315}
\left\|e^{t \Delta} c_{0}\right\|_{\mathbb{Y}}
 &:=&\sup _{t>0}\left\|\nabla e^{t \Delta} c_{0}\right\|_{\dot{\mathbf{N}}_{r_{2}, \lambda, \infty}^{-\beta_{2}}}+\sup _{t>0}\left[t^{\frac{\alpha_{2}}{2}}\left\|\nabla e^{t \Delta} c_{0}\right\| _{M_{q_{2}, \lambda}}\right]+\sup _{t>0}\left\|e^{t \Delta} c_{0}\right\|_{L^{\infty}}\nonumber\\
 &\lesssim&\left\|\nabla c_{0}\right\|_{\dot{N}_{r_{2}, \lambda, \infty}^{-\beta_{2}}}+\left\|\nabla c_{0}\right\|_{\dot{N}_{q_{2}, \lambda, \infty}^{-\alpha_{2}}}+\left\|c_{0}\right\|_{L^{\infty}}\nonumber\\
 &\lesssim&\left\|\nabla c_{0}\right\|_{\dot{\mathbf{N}}_{r_{2}, \lambda, \infty}^{-\beta_{2}}}+\left\|c_{0}\right\|_{L^{\infty}}.
\end{eqnarray}

Combining (\ref{306})-(\ref{315}) and (\ref{302}), we obtain
\begin{eqnarray}
&&\|(u,  c)\|_{\Theta}
:=\|\mathbb{T}(\tilde{u},  \tilde{c})\|_{\Theta}\nonumber\\
&&\lesssim
\|\tilde{u}\|_{\mathrm{X}}^{2}+\|\tilde{c}\|_{\mathrm{Y}}^{2}
+\|\tilde{u}\|_{\mathrm{X}}\|\tilde{c}\|_{\mathrm{Y}}
+\left\|\left(u_{0}, \nabla c_{0}\right)\right\|_{\mathbb{E}}
+\left\|c_{0}\right\|_{L^{\infty}}\nonumber\\
&&\lesssim \epsilon_{0},
\end{eqnarray}
which implies that
\begin{equation*}
(u,  c)=\mathbb{T}(\tilde{u},  \tilde{c}) \in \Theta_{\epsilon_{0}}.
\end{equation*}
We thus complete the proof of Lemma \ref{lem301}.
\end{proof}
To complete the proof of Theorem \ref{thm1}, we need the following Lemma.
\begin{lem}\label{lem302}
For $\epsilon_{0}>0$ small enough, let $(\overline{u},  \overline{c}) \in \Theta_{\epsilon_{0}}$ and $(\tilde{u},  \tilde{c}) \in \Theta_{\epsilon_{0}}$ with $\left.(\overline{u},  \overline{c})\right|_{t=0}=\left.(\tilde{u},  \tilde{c})\right|_{t=0}=\left(u_{0},  c_{0}\right),$
where $\left(u_{0},  c_{0} \right)$ satisfies (\ref{106}), then the map
$\mathbb{T}=\left(\mathbb{T}_{1}, \mathbb{T}_{2}\right)$
defined in (\ref{301}) is  contractive.
\end{lem}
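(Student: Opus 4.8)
The plan is to reduce the contraction estimate to the bilinear and trilinear bounds already established in Lemma \ref{lem301}. Since $(\overline{u},\overline{c})$ and $(\tilde{u},\tilde{c})$ carry the same initial data $(u_0,c_0)$, the linear parts $e^{t\Delta}u_0$ and $e^{t\Delta}c_0$ cancel in the difference $\mathbb{T}(\overline{u},\overline{c})-\mathbb{T}(\tilde{u},\tilde{c})$, so only the nonlinear Duhamel integrals survive. I would first rewrite every quadratic and cubic nonlinearity as a telescoping sum in which exactly one factor is a difference of the two solutions. Setting $u^{*}:=\overline{u}-\tilde{u}$ and $c^{*}:=\overline{c}-\tilde{c}$, the relevant identities are
\begin{align*}
\overline{u}\otimes\overline{u}-\tilde{u}\otimes\tilde{u}&=u^{*}\otimes\overline{u}+\tilde{u}\otimes u^{*},\\
\nabla\overline{c}\odot\nabla\overline{c}-\nabla\tilde{c}\odot\nabla\tilde{c}&=\nabla c^{*}\odot\nabla\overline{c}+\nabla\tilde{c}\odot\nabla c^{*},\\
\overline{u}\cdot\nabla\overline{c}-\tilde{u}\cdot\nabla\tilde{c}&=u^{*}\cdot\nabla\overline{c}+\tilde{u}\cdot\nabla c^{*},\\
|\nabla\overline{c}|^{2}\overline{c}-|\nabla\tilde{c}|^{2}\tilde{c}&=|\nabla\overline{c}|^{2}c^{*}+\big[(\nabla\overline{c}+\nabla\tilde{c})\cdot\nabla c^{*}\big]\tilde{c},\\
(|\nabla\overline{c}|^{2}-|\nabla\tilde{c}|^{2})\underline{d}_{0}&=\big[(\nabla\overline{c}+\nabla\tilde{c})\cdot\nabla c^{*}\big]\underline{d}_{0}.
\end{align*}

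Once the nonlinearities are written in this form, each summand is again quadratic or cubic with precisely one ``difference'' factor, so I would simply rerun the five estimates of Lemma \ref{lem301} term by term: applying Lemmas \ref{lem1}, \ref{lem3} and \ref{lemy23}, the H\"older inequality in Morrey spaces, the heat-kernel decay, the splitting into $0<s\le t-\tau$ versus $s>t-\tau$, and the Beta-function time integrations, all under the constraints (\ref{103}). The only change is bookkeeping: in each bound one of the $\epsilon_0$-sized factors $\|\overline{u}\|_{\mathbb{X}},\|\tilde{u}\|_{\mathbb{X}},\|\overline{c}\|_{\mathbb{Y}},\|\tilde{c}\|_{\mathbb{Y}}$ is replaced by the corresponding piece of $\|u^{*}\|_{\mathbb{X}}$ or $\|c^{*}\|_{\mathbb{Y}}$. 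Collecting all contributions yields
\begin{equation*}
\|\mathbb{T}(\overline{u},\overline{c})-\mathbb{T}(\tilde{u},\tilde{c})\|_{\Theta}\lesssim \epsilon_0\,\big(\|u^{*}\|_{\mathbb{X}}+\|c^{*}\|_{\mathbb{Y}}\big),
\end{equation*}
and taking $\epsilon_0$ small enough forces the constant below $1$, which is the desired contraction.

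The hard part will be the cubic term $|\nabla\overline{c}|^{2}c^{*}$, exactly as flagged in the Remark following Theorem \ref{thm1}. In the uniform-bound step one could absorb the zeroth-order factor using the a priori pointwise bound $|\tilde{c}|=|d-\underline{d}_{0}|\le 2$; here, however, the factor is the \emph{difference} $c^{*}$, for which no such universal bound is available. This is precisely why the $L^{\infty}$-norm of $c$ was built into $\|c\|_{\mathbb{Y}}$: I would estimate $\big\||\nabla\overline{c}|^{2}c^{*}\big\|_{\frac{q_2}{2},\lambda}\le\|\nabla\overline{c}\|_{q_2,\lambda}^{2}\|c^{*}\|_{L^{\infty}}$ by the Morrey H\"older inequality (Lemma \ref{lem1}(iii)), then insert the time weight $\tau^{-\alpha_2}$ and integrate exactly as in (\ref{310})--(\ref{312}). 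The companion cubic piece $\big[(\nabla\overline{c}+\nabla\tilde{c})\cdot\nabla c^{*}\big]\tilde{c}$ is harmless, since there the bounded factor is $\tilde{c}$ with $|\tilde{c}|\le 2$ while the difference sits on the gradient $\nabla c^{*}$, which is controlled by the $M_{q_2,\lambda}$ and $\dot{\mathbf{N}}_{r_2,\lambda,\infty}^{-\beta_2}$ summands of $\|c^{*}\|_{\mathbb{Y}}$. All remaining (quadratic) terms are treated verbatim as their counterparts in Lemma \ref{lem301}.
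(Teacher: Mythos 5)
Your proposal is correct and follows essentially the same route as the paper: cancel the linear parts, telescope each nonlinearity so that exactly one factor is $u^{*}$ or $c^{*}$, rerun the estimates of Lemma \ref{lem301}, and handle the cubic term $|\nabla\overline{c}|^{2}c^{*}$ via the Morrey--H\"older bound $\||\nabla\overline{c}|^{2}c^{*}\|_{\frac{q_2}{2},\lambda}\le\|\nabla\overline{c}\|_{q_2,\lambda}^{2}\|c^{*}\|_{L^{\infty}}$, which is precisely why the $L^{\infty}$-norm sits inside $\|\cdot\|_{\mathbb{Y}}$. The only cosmetic difference is that the paper groups the cubic remainder as $(|\nabla\tilde{c}|^{2}-|\nabla\overline{c}|^{2})(\tilde{c}-\underline{d}_{0})$ while you split the $\tilde{c}$ and $\underline{d}_{0}$ contributions separately; both are bounded the same way.
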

\begin{proof}
For simplicity, we write $\left(u^{*},  c^{*}\right)=(\tilde{u}-\overline{u}, \tilde{c}-\overline{c}) .$ Then we have
\begin{eqnarray*}
\left|\mathbb{T}_{1}(\tilde{u},  \tilde{c})-\mathbb{T}_{1}(\overline{u}, \overline{c})\right|=\left|\int_{0}^{t} e^{(t-s) \Delta} \mathbb{P}\left(\tilde{u} \cdot \nabla u^{*}+u^{*} \cdot \nabla \overline{u}+
\operatorname{div}(\nabla c^{*} \odot \nabla \overline{c}
+\nabla \tilde{c} \odot \nabla c^{*})
\right)(\cdot, s) \mathrm{d} s\right|
\end{eqnarray*}
and
\begin{eqnarray*}
&&\left|\mathbb{T}_{2}(\tilde{u},  \tilde{c})-\mathbb{T}_{2}(\overline{u}, \overline{c})\right|\nonumber\\
&&=\left|\int_{0}^{t} e^{(t-s) \Delta}\left(-\tilde{u} \cdot \nabla c^{*}
-u^{*} \cdot \nabla \overline{c}+
(|\nabla \tilde{c}|^{2}-|\nabla \overline{c}|^{2})(\tilde{c}-\underline{d}_{0})
+|\nabla \overline{c}|^{2}c^{*}
\right)(\cdot, s) \mathrm{d} s\right|.
\end{eqnarray*}

We first compute the $\dot{\mathbf{N}}_{r_{2}, \lambda, \infty}^{-\beta_{2}}$-norm of
\begin{eqnarray*}
 \nabla\int_{0}^{t} e^{(t-\tau) \Delta}\left[
(|\nabla \tilde{c}|^{2}-|\nabla \overline{c}|^{2})(\tilde{c}-\underline{d}_{0})
+|\nabla \overline{c}|^{2}c^{*}\right](\tau) \mathrm{d}\tau.
\end{eqnarray*}

Using Lemma \ref{lemy23}, one has
\begin{eqnarray}
&&\left\|\nabla \int_{0}^{t} e^{(t-\tau) \Delta}\left[(|\nabla \tilde{c}|^{2}-|\nabla \overline{c}|^{2})(\tilde{c}-\underline{d}_{0})
+|\nabla \overline{c}|^{2}c^{*}\right](\tau) \mathrm{d}\tau\right\|_{\dot{\mathbf{N}}_{r_{2}, \lambda, \infty}^{-\beta_{2}}}\\
&&=\sup _{s>0}\left[s^{\beta_{2} / 2} \left\| \nabla e^{s \Delta} \int_{0}^{t} e^{(t-\tau) \Delta}\left[(|\nabla \tilde{c}|^{2}-|\nabla \overline{c}|^{2})(\tilde{c}-\underline{d}_{0})
+|\nabla \overline{c}|^{2}c^{*}\right](\cdot, \tau) \mathrm{d} \tau\right\|_{r_{2}, \lambda}\right]\nonumber\\
&&\leq \int_{0}^{t} \sup _{s>0}\left[s^{\beta_{2} / 2}\left\|e^{s \Delta} \nabla e^{(t-\tau) \Delta}\left[(|\nabla \tilde{c}|^{2}-|\nabla \overline{c}|^{2})(\tilde{c}-\underline{d}_{0})
+|\nabla \overline{c}|^{2}c^{*}\right](\cdot, \tau)\right\|_{r_{2}, \lambda}\right] \mathrm{d} \tau.\nonumber
\end{eqnarray}

Employing
 the fact that
 $|\tilde{c}-\underline{d}_{0}|\leq 3$ and
  $||\nabla \tilde{c}|^{2}-|\nabla \overline{c}|^{2}|
  \leq C|\nabla c^{*}|(|\nabla \tilde{c}|+|\nabla \overline{c}|)$,
one obtains by Lemmas \ref{lem1} and \ref{lem3} that
\begin{eqnarray*}\label{my1}
&&\sup _{0<s \leq t-\tau}\left[s^{\frac{\beta_{2}}{2}}\left\|e^{s \Delta} \nabla e^{(t-\tau) \Delta}\left[(|\nabla \tilde{c}|^{2}-|\nabla \overline{c}|^{2})(\tilde{c}-\underline{d}_{0})
+|\nabla \overline{c}|^{2}c^{*}\right](\cdot, \tau)\right\|_{r_{2}, \lambda}\right]\nonumber\\
&&\lesssim
(t-\tau)^{\frac{\beta_{2}}{2}}(t-\tau)^{-\frac{1}{2}-\frac{N-\lambda}{2}\left(
\frac{2}{q_{2}}-\frac{1}{r_{2}}\right)}\||\nabla c^{*}|(|\nabla \tilde{c}|+|\nabla \overline{c}|)\|_{\frac{q_{2}}{2}, \lambda}\nonumber\\
&&\quad+(t-\tau)^{\frac{\beta_{2}}{2}}(t-\tau)^{-\frac{1}{2}-\frac{N-\lambda}{2}
\left(\frac{2}{q_{2}}-\frac{1}{r_{2}}\right)}\| |\nabla \overline{c}|^{2}c^{*} \|_{\frac{q_{2}}{2}, \lambda}\nonumber\\
&&\lesssim
(t-\tau)^{-\frac{N-\lambda}{ q_{2}}}\|\nabla c^{*}\|_{q_{2}, \lambda}\|(\nabla \tilde{c}, \nabla \overline{c})\|_{q_{2}, \lambda}
+(t-\tau)^{-\frac{N-\lambda}{ q_{2}}}\|\nabla \overline{c}(\cdot, \tau)\|_{q_{2}, \lambda}^{2}\| c^{*}\|_{L^{\infty}}.
\end{eqnarray*}
For $s>t-\tau,$ note that $(t-\tau+s) / 2<s<t-\tau+s,$
it then holds that
\begin{eqnarray*}\label{my2}
&&\sup _{s > t-\tau}\left[s^{\frac{\beta_{2}}{2}}\left\|e^{s \Delta} \nabla e^{(t-\tau) \Delta}\left[(|\nabla \tilde{c}|^{2}-|\nabla \overline{c}|^{2})(\tilde{c}-\underline{d}_{0})
+|\nabla \overline{c}|^{2}c^{*}\right](\cdot, \tau)\right\|_{r_{2}, \lambda}\right]\nonumber\\
&&\lesssim
s^{\frac{\beta_{2}}{2}}(s+t-\tau)^{-\frac{1}{2}-\frac{N-\lambda}{2}
\left(\frac{2}{q_{2}}-\frac{1}{r_{2}}\right)}\||\nabla c^{*}|(|\nabla \tilde{c}|+|\nabla \overline{c}|)\|_{\frac{q_{2}}{2}, \lambda}\nonumber\\
&&\quad+s^{\frac{\beta_{2}}{2}}(s+t-\tau)^{-\frac{1}{2}-\frac{N-\lambda}{2}
\left(\frac{2}{q_{2}}-\frac{1}{r_{2}}\right)}\| |\nabla \overline{c}|^{2}c^{*} \|_{\frac{q_{2}}{2}, \lambda}\nonumber\\
&&\lesssim
(t-\tau)^{-\frac{N-\lambda}{ q_{2}}}
\sup _{s>t-\tau}\left(1+\frac{s}{t-\tau}\right)^{-\frac{N-\lambda}{ q_{2}}}\|\nabla c^{*}\|_{q_{2}, \lambda}\|(\nabla \tilde{c}, \nabla \overline{c})\|_{q_{2}, \lambda}\nonumber\\
&&\quad+(t-\tau)^{-\frac{N-\lambda}{ q_{2}}}
\sup _{s>t-\tau}\left(1+\frac{s}{t-\tau}\right)^{-\frac{N-\lambda}{q_{2}}}
\|\nabla \overline{c}(\cdot, \tau)\|_{q_{2}, \lambda}^{2}\| c^{*}\|_{L^{\infty}}.
\end{eqnarray*}
Since $ \frac{1}{q_{2}}<\frac{1}{N-\lambda},$ we thus obtain
\begin{eqnarray*}\label{my3}
&&\left\|\nabla \int_{0}^{t} e^{(t-\tau) \Delta}\left[(|\nabla \tilde{c}|^{2}-|\nabla \overline{c}|^{2})(\tilde{c}-\underline{d}_{0})
+|\nabla \overline{c}|^{2}c^{*}\right](\tau) \mathrm{d}\tau\right\|_{\dot{\mathbf{N}}_{r_{2}, \lambda, \infty}^{-\beta_{2}}}\nonumber\\
&&\lesssim \bigg\{\left[\sup _{t>0}\left(t^{\frac{\alpha_{2}}{2}}\|\nabla c^{*}(\cdot, t)\|_{q_{2}, \lambda}\right)\right]\left[\sup _{t>0}\left(t^{\frac{\alpha_{2}}{2}}\left(\|\nabla \tilde{c}(\cdot, t)\|_{q_{2}, \lambda}+\|\nabla \overline{c}(\cdot, t)\|_{q_{2}, \lambda}\right)\right)\right]\nonumber\\
&&\qquad+\left[\sup _{t>0}\left(t^{\alpha_{2}}\|\nabla \overline{c}(\cdot, t)\|_{q_{2}, \lambda}^{2}\right)\right] \|c^{*}\|_{L\infty}\bigg\}
\times \int_{0}^{t}(t-\tau)^{-\frac{N-\lambda}{ q_{2}}} \tau^{-\alpha_{2}} \mathrm{d} \tau\nonumber\\
&&\lesssim \|c^{*}\|_{\mathbb{Y}}(\|\tilde{c}\|_{\mathbb{Y}}
+\|\overline{c}\|_{\mathbb{Y}}
+\|\overline{c}\|_{\mathbb{Y}}^{2}).
\end{eqnarray*}
Next, we calculate the $M_{q_{2}, \lambda}$-norm of
$\nabla\int_{0}^{t} e^{(t-\tau) \Delta}\left[(|\nabla \tilde{c}|^{2}-|\nabla \overline{c}|^{2})(\tilde{c}-\underline{d}_{0})
+|\nabla \overline{c}|^{2}c^{*}\right](\tau) \mathrm{d}\tau$.
\begin{eqnarray*}\label{my4}
  &&\left\|\nabla \int_{0}^{t} e^{(t-\tau) \Delta}\left[(|\nabla \tilde{c}|^{2}-|\nabla \overline{c}|^{2})(\tilde{c}-\underline{d}_{0})
+|\nabla \overline{c}|^{2}c^{*}\right](\cdot, \tau) \mathrm{d} \tau\right\|_{q_{2}, \lambda}\nonumber\\
  &&\lesssim
  \int_{0}^{t}(t-\tau)^{-\frac{1}{2}-\frac{N-\lambda}{2}
  \left(\frac{2}{q_{2}}-\frac{1}{q_{2}}\right)}
  \|\nabla c^{*}\|_{q_{2}, \lambda}\|(\nabla \tilde{c}, \nabla \overline{c})\|_{q_{2}, \lambda}\nonumber\\
  &&\qquad\qquad+(t-\tau)^{-\frac{1}{2}-\frac{N-\lambda}{2}
  \left(\frac{2}{q_{2}}-\frac{1}{q_{2}}\right)} \| |\nabla \overline{c}|^{2}c^{*}\|_{\frac{ q_{2}}{2}, \lambda}\mbox{d}\tau\nonumber\\
  &&\lesssim
  \bigg\{\left[\sup _{t>0}\left(t^{\frac{\alpha_{2}}{2}}\|\nabla c^{*}\|_{q_{2}, \lambda}\right)\right]\left[\sup _{t>0}\left(t^{\frac{\alpha_{2}}{2}}(\|\nabla \tilde{c}\|_{q_{2}, \lambda}
  +\|\nabla \overline{c}\|_{q_{2}, \lambda})\right)\right]\nonumber\\
  &&\qquad+\left[\sup _{t>0}\left(t^{\alpha_{2}}\|\nabla \overline{c}\|_{q_{2}, \lambda}^{2}\right)\right]\|c^{*}\|_{L^{\infty}}\bigg\}
  \times \int_{0}^{t}
  (t-\tau)^{-\frac{1}{2}-\frac{N-\lambda}{2}\left(\frac{2}{q_{2}}-\frac{1}{q_{2}}\right)} \tau^{-\alpha_{2}} \mbox{d}\tau\nonumber\\
  &&\lesssim t^{-\frac{1}{2}+\frac{N-\lambda}{2 q_{2}}}\|c^{*}\|_{\mathbb{Y}}(\|\tilde{c}\|_{\mathbb{Y}}
+\|\overline{c}\|_{\mathbb{Y}}
+\|\overline{c}\|_{\mathbb{Y}}^{2}).
\end{eqnarray*}

We calculate $L^{\infty}$-norm of
$\int_{0}^{t} e^{(t-\tau) \Delta}\left[(|\nabla \tilde{c}|^{2}-|\nabla \overline{c}|^{2})(\tilde{c}-\underline{d}_{0})
+|\nabla \overline{c}|^{2}c^{*}\right](\tau) \mathrm{d}\tau$.
Note that $q_{2}>N-\lambda,$ so there holds
\begin{eqnarray*}\label{my5}
&&\left\|\int_{0}^{t} e^{(t-\tau) \Delta}\left[(|\nabla \tilde{c}|^{2}-|\nabla \overline{c}|^{2})(\tilde{c}-\underline{d}_{0})
+|\nabla \overline{c}|^{2}c^{*}\right](\cdot, \tau) \mathrm{d} \tau\right\|_{L^{\infty}}\nonumber\\
&&\lesssim
\int_{0}^{t}(t-\tau)^{-\frac{N-\lambda}{q_{2}}}\|\nabla c^{*}\|_{q_{2}, \lambda}
\|(\nabla \tilde{c},\nabla \overline{c})\|_{q_{2}, \lambda}+(t-\tau)^{-\frac{N-\lambda}{ q_{2}}}\|\nabla\overline{c}\|_{q_{2}, \lambda}^{2}\|c^{*}\|_{L^{\infty}}\mathrm{d} \tau\nonumber\\
&&\lesssim
\bigg\{\left[\sup _{t>0}\left(t^{\frac{\alpha_{2}}{2}}\|\nabla c^{*}(\cdot, t)\|_{q_{2}, \lambda}\right)\right]\left[\sup _{t>0}\left(t^{\frac{\alpha_{2}}{2}}(\|\nabla \tilde{c}\|_{q_{2}, \lambda}
  +\|\nabla \overline{c}\|_{q_{2}, \lambda})\right)\right]\nonumber\\
&&\qquad+\left[\sup _{t>0}\left(t^{\frac{\alpha_{2}}{2}}\|\nabla\overline{c}(\cdot, t)\|_{q_{2}, \lambda}\right)\right]^{2}\|c^{*}\|_{L\infty}\bigg\}
\times \int_{0}^{t}(t-\tau)^{-\frac{N-\lambda}{ q_{2}}} \tau^{-\alpha_{2}} \mathrm{d} \tau\nonumber\\
&&\lesssim
\|c^{*}\|_{\mathbb{Y}}(\|\tilde{c}\|_{\mathbb{Y}}
+\|\overline{c}\|_{\mathbb{Y}}
+\|\overline{c}\|_{\mathbb{Y}}^{2}).
\end{eqnarray*}

For the estimates of the  remaining part of $\left\|\mathbb{T}_{1}(\tilde{u},  \tilde{c})-\mathbb{T}_{1}(\overline{u}, \overline{c})\right\|_{\mathbb{X}}$
and $\left\|\mathbb{T}_{2}(\tilde{u},  \tilde{c})-\mathbb{T}_{2}(\overline{u}, \overline{c})\right\|_{\mathbb{Y}}$,
we can repeat  the proof of the corresponding part as in Lemma \ref{lem301},
and thus  we conclude that
\begin{eqnarray*}
&&\left\|\mathbb{T}_{1}(\tilde{u},  \tilde{c})-\mathbb{T}_{1}(\overline{u}, \overline{c})\right\|_{\mathbb{X}}\nonumber\\
 &&\lesssim
\left\|u^{*}\right\|_{\mathbb{X}}\left(\|\tilde{u}\|_{\mathbb{X}}
+\|\overline{u}\|_{\mathbb{X}}\right)
+\left\| c^{*}\right\|_{\mathbb{Y}}\left(\|\tilde{c}\|_{\mathbb{Y}}
+\|\overline{c}\|_{\mathbb{Y}}\right)\nonumber\\
&&\lesssim C_{1}\epsilon_{0}(\left\|u^{*}\right\|_{\mathbb{X}}+\left\| c^{*}\right\|_{\mathbb{Y}})
\end{eqnarray*}
and
\begin{eqnarray*}
&&\left\|\mathbb{T}_{2}(\tilde{u},  \tilde{c})-\mathbb{T}_{2}(\overline{u}, \overline{c})\right\|_{\mathbb{Y}}\nonumber\\
 &&\lesssim
\left(\|u^{*}\right\|_{\mathbb{X}}
+\left\| c^{*}\right\|_{\mathbb{Y}})(\|\tilde{u}\|_{\mathbb{X}}
+\|\overline{u}\|_{\mathbb{X}}+\|\tilde{c}\|_{\mathbb{Y}}
+\|\overline{c}\|_{\mathbb{Y}}+\|\overline{c}\|_{\mathbb{Y}}^{2})\nonumber\\
&&\lesssim
C_{2}\epsilon_{0}(\left\|u^{*}\right\|_{\mathbb{X}}+\left\| c^{*}\right\|_{\mathbb{Y}}).
\end{eqnarray*}
Choosing  $\epsilon_{0}>0$ small enough so that $\left(C_{1}+C_{2}\right) \epsilon_{0} \leq \frac{1}{2},$ we can then prove Lemma \ref{lem302}.
\end{proof}

\section{Proof of Theorem \ref{thm2}--large time behavior}
In this section we prove Theorem \ref{thm2}.

The proof of Theorem \ref{thm2} is a consequence of the
following Lemma \ref{lem401}.

 Let $(\overline{u}, \overline{c})$ and $(\tilde{u},  \tilde{c}),$ respectively, be the solutions
of $(\ref{302})$ constructed in Theorem \ref{thm1} corresponding to the initial data $\left(\overline{u}_{0},  \overline{c}_{0}\right)$ and $\left(\tilde{u}_{0},  \tilde{c}_{0}\right),$
respectively.
According to  Theorem \ref{thm1}, there exists a constant $C_{0}$ such that
\begin{equation}\label{401}
\|(\overline{u}, \overline{c})\|_{\Theta} \leq C_{0} \epsilon_{0}, \quad\|(\tilde{u}, \tilde{c})\|_{\Theta} \leq C_{0} \epsilon_{0}.
\end{equation}
Let $\left(u^{*}, c^{*}\right)=(\tilde{u}-\overline{u}, \tilde{c}-\overline{c}),$ then we have
\begin{eqnarray*}
&&\tilde{u}-\overline{u}
=e^{t \Delta}(\tilde{u}_{0}-\overline{u}_{0})\nonumber\\
&&\quad\quad\quad\quad+\int_{0}^{t} e^{(t-s) \Delta} \mathbb{P}\left(\tilde{u} \cdot \nabla u^{*}+u^{*} \cdot \nabla \overline{u}+
\operatorname{div}(\nabla c^{*} \odot \nabla \overline{c}
+\nabla \tilde{c} \odot \nabla c^{*})
\right)(\cdot, s) \mathrm{d} s,\\
&&\tilde{c}-\overline{c}
=e^{t \Delta}(\tilde{c}_{0}-\overline{c}_{0})\nonumber\\
&&\quad\quad\quad\quad+
\int_{0}^{t} e^{(t-s) \Delta}\left(-\tilde{u} \cdot \nabla c^{*}
-u^{*} \cdot \nabla \overline{c}+
(|\nabla \tilde{c}|^{2}-|\nabla \overline{c}|^{2})(\tilde{c}-\underline{d}_{0})
+|\nabla \overline{c}|^{2}c^{*}
\right)(\cdot, s) \mathrm{d} s.\nonumber
\end{eqnarray*}
Next, we introduce two auxiliary functions
\begin{eqnarray*}
&&h(t)
=t^{\frac{\alpha_{1}}{2}}\left\|e^{t \Delta}\left(\tilde{u}_{0}-\overline{u}_{0}\right)\right\|_{q_{1}, \lambda}+t^{\frac{\alpha_{2}}{2}}\left\|\nabla e^{t \Delta}\left(\tilde{c}_{0}-\overline{c}_{0}\right)\right\|_{q_{2}, \lambda}\nonumber\\
&&\quad\quad\quad+\left\|e^{t \Delta}\left(\tilde{u}_{0}-\overline{u}_{0}, \nabla \tilde{c}_{0}-\nabla \overline{c}_{0}\right)\right\|_{\mathbb{E}}
+\left\|e^{t \Delta}\left(\tilde{c}_{0}-\overline{c}_{0}\right)\right\|_{L^{\infty}}
\end{eqnarray*}
and
\begin{eqnarray*}
l(t)=t^{\frac{\alpha_{1}}{2}}\|\tilde{u}-\overline{u}\|_{q_{1}, \lambda}+t^{\frac{\alpha_{2}}{2}}\|\nabla(\tilde{c}-\overline{c})\|_{q_{2}, \lambda}
+\|(\tilde{u}-\overline{u},  \nabla \tilde{c}-\nabla \overline{c})\|_{\mathbb{E}}
+\|\tilde{c}-\overline{c}\|_{L^{\infty}}.
\end{eqnarray*}

\begin{lem}\label{lem401}
There holds
\begin{equation}\label{403}
\lim _{t \rightarrow \infty} h(t)=0
\end{equation}
$$\Updownarrow$$
\begin{equation}\label{404}
\lim _{t \rightarrow \infty} l(t)=0.
\end{equation}
\end{lem}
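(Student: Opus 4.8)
The plan is to exploit the integral representation of the difference $(u^{*},c^{*})=(\tilde u-\overline u,\tilde c-\overline c)$ displayed just above the statement, which writes $u^{*}$ and $c^{*}$ as the free evolutions $e^{t\Delta}(\tilde u_0-\overline u_0)$, $e^{t\Delta}(\tilde c_0-\overline c_0)$ plus Duhamel integrals of bilinear and trilinear terms in each of which exactly one factor is a difference ($u^{*}$, $\nabla c^{*}$, or $c^{*}$) while the remaining factors are one of the two solutions. Measuring both sides in the four norms that constitute $l(t)$ — namely $t^{\alpha_1/2}\|\cdot\|_{q_1,\lambda}$, $t^{\alpha_2/2}\|\nabla\cdot\|_{q_2,\lambda}$, the $\mathbb{E}$-norm, and the $L^{\infty}$-norm — and recalling that the free-evolution contributions are exactly the four terms of $h(t)$, I obtain by the triangle inequality the two-sided comparison
$$|l(t)-h(t)|\le N(t),$$
where $N(t)$ collects the norms of all the nonlinear Duhamel integrals. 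Everything thus reduces to dominating $N(t)$ by $l$ with a small prefactor.

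To estimate $N(t)$ I reuse verbatim the block-by-block computations of Lemmas \ref{lem301} and \ref{lem302}. Applying Lemmas \ref{lem1}, \ref{lem3}, \ref{lemy23}, the boundedness of $\mathbb{P}$ in Morrey spaces, the a priori bound $|\tilde c-\underline d_0|\le 3$, and the elementary inequality $\big||\nabla\tilde c|^{2}-|\nabla\overline c|^{2}\big|\le C|\nabla c^{*}|(|\nabla\tilde c|+|\nabla\overline c|)$, each nonlinear term produces a time convolution of the schematic form
$$t^{\,p}\int_{0}^{t}(t-\tau)^{-a}\tau^{-b}\,\big(\text{weighted difference norm at }\tau\big)\big(\text{weighted solution norm at }\tau\big)\,\mathrm{d}\tau.$$
Here the solution factor is bounded pointwise by $C\epsilon_0$ by \eqref{401}, the difference factor is bounded by $l(\tau)$, and the exponents $a,b,p$ are exactly those of Section 3, so $p=a+b-1$ with $a<1$, $b<1$ ensured by \eqref{103}--\eqref{104}. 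Consequently every piece of $N(t)$ is dominated by $C\epsilon_0\,t^{\,p}\int_{0}^{t}(t-\tau)^{-a}\tau^{-b}l(\tau)\,\mathrm{d}\tau$.

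The crucial and only genuinely delicate step is the limit passage in this critical, scale-invariant convolution: since $p=a+b-1$ the integral does not decay, so one cannot merely bound $l$ by its supremum on $[0,t/2]$. Instead I substitute $\tau=t\sigma$, which converts the convolution into the fixed-domain integral $\int_{0}^{1}(1-\sigma)^{-a}\sigma^{-b}\,l(t\sigma)\,\mathrm{d}\sigma$. Because $l$ is bounded (by Theorem \ref{thm1} and \eqref{401}) and the weight $(1-\sigma)^{-a}\sigma^{-b}$ is integrable on $(0,1)$, the reverse Fatou lemma together with $\limsup_{t\to\infty}l(t\sigma)=L:=\limsup_{t\to\infty}l(t)$ for each fixed $\sigma\in(0,1)$ yields
$$\limsup_{t\to\infty}N(t)\le C\epsilon_0\,L\,B(1-a,1-b)=C'\epsilon_0 L.$$
Taking $\limsup_{t\to\infty}$ in $l(t)\le h(t)+N(t)$ gives $L\le\limsup_{t\to\infty}h(t)+C'\epsilon_0 L$, and choosing $\epsilon_0$ so small that $C'\epsilon_0\le\tfrac12$ gives $L\le 2\limsup_{t\to\infty}h(t)$; hence \eqref{403} forces $L=0$, i.e.\ \eqref{404}.

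The reverse implication is entirely symmetric: applying the same substitution to $h(t)\le l(t)+N(t)$ produces $\limsup_{t\to\infty}h(t)\le\limsup_{t\to\infty}l(t)+C'\epsilon_0 L$, so \eqref{404} forces \eqref{403}. The main obstacle is therefore not the nonlinear estimates — which are copied directly from Section 3 — but the correct limit passage through the critical convolution, where the scaling substitution and reverse Fatou are essential; care is also needed to check component by component that the scaling powers cancel so as to leave a clean $\limsup$ inequality whose constant $C'\epsilon_0$ can be forced below $\tfrac12$ by the smallness hypothesis.
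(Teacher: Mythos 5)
Your proposal is correct and follows essentially the same route as the paper: both write the difference via Duhamel, reuse the Section 3 estimates so that every nonlinear block is a critical convolution $t^{p}\int_0^t(t-\tau)^{-a}\tau^{-b}\,l(\tau)\,\mathrm{d}\tau$ with solution factors absorbed into $C\epsilon_0$ by \eqref{401}, rescale $\tau=ts$, and close by absorbing $C'\epsilon_0\limsup l$ for $\epsilon_0$ small. The only (immaterial) difference is that you apply reverse Fatou over the whole unit interval, getting a Beta-function constant, whereas the paper splits at $\delta t$, bounds the tail by $\epsilon_0\sup_{\delta t\le\tau\le t}$ and sends $\delta\to0$ so that the constant is $C_1\epsilon_0(1+F(\delta))$; both absorptions work for the same reason.
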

\begin{proof}
We just prove
$"(\ref{403})\Longrightarrow (\ref{404})",$ the proof of the opposite direction is similar.

First,
invoking  Lemma \ref{lemy23} and the boundedness of $\mathbb{P}$ in Morrey space,
one finds
\begin{eqnarray}
&&\|(\overline{u}-\tilde{u},  \nabla \overline{c}-\nabla \tilde{c})\|_{\dot{\mathbf{N}}_{r_{1}, \lambda, \infty}^{-\beta_{1}} \times \dot{\mathbf{N}}_{r_{2}, \lambda, \infty}^{-\beta_{2}}}\nonumber\\
&&=\left\|\left(e^{t \Delta} \overline{u}_{0}-e^{t \Delta} \tilde{u}_{0}, \nabla e^{t \Delta} \overline{c}_{0}-\nabla e^{t \Delta} \tilde{c}_{0}\right)\right\|_{\dot{\mathbf{N}}_{r_{1}, \lambda, \infty}^{-\beta_{1}} \times \dot{\mathbf{N}}_{r_{2}, \lambda, \infty}^{-\beta_{2}} }\nonumber\\
&&\quad+\int_{0}^{t} \sup _{s>0}\bigg[s^{\beta_{1} / 2}\big\|e^{s \Delta} e^{(t-\tau) \Delta} \mathbb{P}(\tilde{u} \cdot \nabla u^{*}+u^{*} \cdot \nabla \overline{u}\nonumber\\
&&\quad\quad+
\operatorname{div}(\nabla c^{*} \odot \nabla \overline{c}
+\nabla \tilde{c} \odot \nabla c^{*}))(\cdot, \tau)\big\|_{r_{1}, \lambda}\bigg]\mathrm{d} \tau\nonumber\\
&&\quad+\int_{0}^{t} \sup _{s>0}\bigg[s^{\beta_{2} / 2}\big\|e^{s \Delta} \nabla e^{(t-\tau) \Delta} \big(\tilde{u} \cdot \nabla c^{*}
+u^{*} \cdot \nabla \overline{c}\nonumber\\
&&\quad\quad+
(|\nabla \tilde{c}|^{2}-|\nabla \overline{c}|^{2})(\tilde{c}-\underline{d}_{0})
+|\nabla \overline{c}|^{2}c^{*}\big)(\cdot, \tau)\big\|_{r_{2}, \lambda}\bigg] \mathrm{d} \tau\nonumber\\
&&=\left\|\left(e^{t \Delta} \overline{u}_{0}-e^{t \Delta} \tilde{u}_{0}, \nabla e^{t \Delta} \overline{c}_{0}-\nabla e^{t \Delta} \tilde{c}_{0}\right)\right\|_{\mathbb{E}}+I+II.
\end{eqnarray}

Employing Lemmas \ref{lem1} and \ref{lem3}, we have
\begin{eqnarray}
&&\sup _{s>0}\left[s^{\frac{\beta_{1}}{2}}\left\|e^{s \Delta} e^{(t-\tau) \Delta} \mathbb{P}\left(\tilde{u} \cdot \nabla u^{*}+u^{*} \cdot \nabla \overline{u}
+\operatorname{div}(\nabla \tilde{c} \odot \nabla c^{*}
+\nabla c^{*} \odot \nabla \overline{c})
\right)(\cdot, \tau)\right\|_{r_{1}, \lambda}\right]\nonumber\\
&&\leq
(t-\tau)^{-\frac{N-\lambda}{q_{1}}}\left\|u^{*}(\cdot, \tau)\right\|_{q_{1}, \lambda}\|(\tilde{u}(\cdot, \tau), \overline{u}(\cdot, \tau))\|_{q_{1}, \lambda}\nonumber\\
&&\quad+(t-\tau)^{-\frac{N-\lambda}{q_{2}}}\left\|\nabla c^{*}(\cdot, \tau)\right\|_{q_{2}, \lambda}\|(\nabla\tilde{c}(\cdot, \tau), \nabla\overline{c}(\cdot, \tau))\|_{q_{2}, \lambda}
\end{eqnarray}
and
\begin{eqnarray}
&&\sup _{s>0}\left[s^{\frac{\beta_{2}}{2}}\left\|e^{s \Delta} \nabla e^{(t-\tau) \Delta}\left(\tilde{u} \cdot \nabla c^{*}+u^{*} \cdot \nabla \overline{c}
+(|\nabla \tilde{c}|^{2}-|\nabla \overline{c}|^{2})(\tilde{c}-\underline{d}_{0})
+|\nabla \overline{c}|^{2}c^{*}\right)(\cdot, \tau)\right\|_{r_{2}, \lambda}\right]\nonumber\\
&&\leq
(t-\tau)^{-\frac{N-\lambda}{2 q_{1}}-\frac{N-\lambda}{2 q_{2}}}\left(\|\tilde{u}(\cdot, \tau)\|_{q_{1}, \lambda}\left\|\nabla c^{*}(\cdot, \tau)\right\|_{q_{2}, \lambda}+\left\|u^{*}(\cdot, \tau)\right\|_{q_{1}, \lambda}\|\nabla \overline{c}\|_{q_{2}, \lambda}\right)\nonumber\\
&&\quad+(t-\tau)^{-\frac{N-\lambda}{q_{2}}}
\left\|\nabla c^{*}(\cdot, \tau)\right\|_{q_{2}, \lambda}
(\|\nabla \overline{c}\|_{q_{2}, \lambda}
+\|\nabla \tilde{c}\|_{q_{2}, \lambda}).
\end{eqnarray}

Let $0 < \delta<1$, using (\ref{401}),
  we estimate $I$ as follows
\begin{eqnarray}\label{405}
&&I
\lesssim
\left(\int_{0}^{\delta t}+\int_{\delta t}^{t}\right)\big[(t-\tau)^{-\frac{N-\lambda}{q_{1}}}\left\|u^{*}\right\|_{q_{1}, \lambda}\|(\tilde{u}, \overline{u})\|_{q_{1}, \lambda}\nonumber\\
&&\qquad\qquad\qquad\qquad+(t-\tau)^{-\frac{N-\lambda}{q_{2}}}\left\|\nabla c^{*}\right\|_{q_{2}, \lambda}\|(\nabla\tilde{c}, \nabla\overline{c})\|_{q_{2}, \lambda}\big]\mbox{d}\tau\nonumber\\
&&\lesssim
\|(\overline{u}, \tilde{u})\|_{\mathbb{X}} \int_{0}^{\delta t}(t-\tau)^{-\frac{N-\lambda}{q_{1}}} \tau^{-\alpha_{1}}\left(\tau^{\frac{\alpha_{1}}{2}}\left\|u^{*}(\cdot, \tau)\right\|_{q_{1}, \lambda}\right) \mathrm{d} \tau\nonumber\\
&&\quad+\|(\overline{c}, \tilde{c})\|_{\mathbb{Y}} \int_{0}^{\delta t}(t-\tau)^{-\frac{N-\lambda}{q_{2}}} \tau^{-\alpha_{2}}\left(\tau^{\frac{\alpha_{2}}{2}}\left\|\nabla c^{*}(\cdot, \tau)\right\|_{q_{2}, \lambda}\right) \mathrm{d} \tau\nonumber\\
&&\quad+\|(\overline{u}, \tilde{u})\|_{\mathbb{X}}\left[\sup _{\delta t \leq \tau \leq t}\left(\tau^{\frac{\alpha_{1}}{2}}\left\|u^{*}(\cdot, \tau)\right\|_{q_{1}, \lambda}\right)\right]\nonumber\\
&&\quad+\|(\overline{c}, \tilde{c})\|_{\mathbb{X}}\left[\sup _{\delta t \leq \tau \leq t}\left(\tau^{\frac{\alpha_{2}}{2}}\left\|\nabla c^{*}(\cdot, \tau)\right\|_{q_{2}, \lambda}\right)\right]\nonumber\\
&&\lesssim \epsilon_{0} \int_{0}^{\delta t}(t-\tau)^{-\frac{N-\lambda}{q_{1}}} \tau^{-\alpha_{1}}\left(\tau^{\frac{\alpha_{1}}{2}}\left\|u^{*}(\cdot, \tau)\right\|_{q_{1}, \lambda}\right) \mathrm{d} \tau\nonumber\\
&&\quad+\epsilon_{0} \int_{0}^{\delta t}(t-\tau)^{-\frac{N-\lambda}{q_{2}}} \tau^{-\alpha_{2}}\left(\tau^{\frac{\alpha_{2}}{2}}\left\|\nabla c^{*}(\cdot, \tau)\right\|_{q_{2}, \lambda}\right) \mathrm{d}\tau\nonumber\\
&&\quad+\epsilon_{0} \left\{\sup _{\delta t \leq \tau \leq t}\left[\tau^{\frac{\alpha_{1}}{2}}\left\|u^{*}(\cdot, \tau)\right\|_{q_{1}, \lambda}\right]+
\sup _{\delta t \leq \tau \leq t}\left[\tau^{\frac{\alpha_{2}}{2}}\left\|\nabla c^{*}(\cdot, \tau)\right\|_{q_{2}, \lambda}\right]\right\}.
\end{eqnarray}

For $II$ , we use the same argument as above to get
\begin{eqnarray}\label{407}
&&II \lesssim
\epsilon_{0} \int_{0}^{\delta t}(t-\tau)^{-\frac{N-\lambda}{2 q_{1}}-\frac{N-\lambda}{2 q_{2}}} \tau^{-\frac{\alpha_{1}+\alpha_{2}}{2}}\left(\tau^{\frac{\alpha_{2}}{2}}\left\|\nabla c^{*}(\cdot, \tau)\right\|_{q_{2}, \lambda}+\tau^{\frac{\alpha_{1}}{2}}\left\|u^{*}(\cdot, \tau)\right\|_{q_{1}, \lambda}\right) \mathrm{d}\tau\nonumber\\
&&\qquad+\epsilon_{0} \int_{0}^{\delta t}(t-\tau)^{-\frac{N-\lambda}{ q_{2}}} \tau^{-\alpha_{2}}\left(\tau^{\frac{\alpha_{2}}{2}}\left\|\nabla c^{*}(\cdot, \tau)\right\|_{q_{2}, \lambda}\right) \mathrm{d} \tau\nonumber\\
&&\qquad+\epsilon_{0}\left[\sup _{\delta t \leq \tau \leq t}\left(\tau^{\alpha_{1} / 2}\left\|u^{*}(\cdot, \tau)\right\|_{q_{1}, \lambda}\right)
+\sup _{\delta t \leq \tau \leq t}\left(\tau^{\alpha_{2} / 2}\left\|\nabla c^{*}(\cdot, \tau)\right\|_{q_{2}, \lambda}\right)\right].
\end{eqnarray}
From (\ref{405})-(\ref{407}), we get
\begin{eqnarray}\label{408}
&&\|(\overline{u}-\tilde{u},  \nabla \overline{c}-\nabla \tilde{c})\|_{\dot{\mathbf{N}}_{r_{1}, \lambda, \infty}^{-\beta_{1}}
\times \dot{\mathbf{N}}_{r_{2}, \lambda, \infty}^{-\beta_{2}}
} \nonumber\\
&&=\left\|\left(e^{t \Delta} \overline{u}_{0}-e^{t \Delta} \tilde{u}_{0},  \nabla e^{t \Delta} \overline{c}_{0}-\nabla e^{t \Delta} \tilde{c}_{0}\right)\right\|_{\mathbb{E}}+I+II\nonumber\\
&&\lesssim \epsilon_{0} \int_{0}^{\delta}(1-s)^{-\frac{N-\lambda}{q_{1}}} s^{-\alpha_{1}}\left[(t s)^{\alpha_{1} / 2}\left\|u^{*}(t s)\right\|_{q_{1}, \lambda}\right] \mathrm{d} s\nonumber\\
&&\quad+\epsilon_{0} \int_{0}^{\delta}(1-s)^{-\frac{N-\lambda}{q_{2}}} s^{-\alpha_{2}}\left[(t s)^{\alpha_{2} / 2}\left\|\nabla c^{*}(t s)\right\|_{q_{2}, \lambda}\right] \mathrm{d} s\nonumber\\
&&\quad+\epsilon_{0} \int_{0}^{\delta}(1-s)^{-\frac{N-\lambda}{2 q_{1}}-\frac{N-\lambda}{2 q_{2}}} s^{-\frac{\alpha_{1}+\alpha_{2}}{2}}\left[(t s)^{\frac{\alpha_{2}}{2}}\left\|\nabla c^{*}(t s)\right\|_{q_{2}, \lambda}+(t s)^{\frac{\alpha_{1}}{2}}\left\|u^{*}(t s)\right\|_{q_{1}, \lambda}\right] \mathrm{d} s\nonumber\\
&&\quad+\epsilon_{0}\left[\sup _{\delta t \leq \tau \leq t}\left(\tau^{\alpha_{1} / 2}\left\|u^{*}(\cdot, \tau)\right\|_{q_{1}, \lambda}\right)
+
\sup _{\delta t \leq \tau \leq t}\left(\tau^{\alpha_{2} / 2}\left\|\nabla c^{*}(\cdot, \tau)\right\|_{q_{2}, \lambda}\right)\right].
\end{eqnarray}

Next, we calculate
 $M_{q_{1}, \lambda} \times M_{q_{2}, \lambda}  \times L^{\infty}$ norm of $(\overline{u}-\tilde{u},  \nabla \overline{c}-\nabla \tilde{c}, \overline{c}-\tilde{c})$.
\begin{eqnarray*}
&&\|(\overline{u}-\tilde{u},  \nabla \overline{c}-\nabla \tilde{c},
 \overline{c}-\tilde{c})\|_{M_{q_{1}, \lambda}
  \times M_{q_{2}, \lambda}\times L^{\infty} }\nonumber\\
&&\lesssim\left\|\left(e^{t \Delta} \overline{u}_{0}-e^{t \Delta} \tilde{u}_{0}, \nabla e^{t \Delta} \overline{c}_{0}-\nabla e^{t \Delta} \tilde{c}_{0},
e^{t \Delta} \overline{c}_{0}- e^{t \Delta} \tilde{c}_{0}
\right)\right\|_{M_{q_{1}, \lambda}
 \times M_{q_{2}, \lambda}
 \times L^{\infty}
 } \nonumber\\
&&\qquad+J_{1}+J_{2}+J_{3},
\end{eqnarray*}
where
\begin{equation*}
J_{1}=\int_{0}^{t}\left\|e^{(t-\tau) \Delta} \mathbb{P}\left(\tilde{u} \cdot \nabla u^{*}+u^{*} \cdot \nabla \overline{u}+
\operatorname{div}(\nabla \tilde{c} \odot \nabla c^{*}
+\nabla c^{*}\odot\nabla \overline{c}  )
\right)(\cdot, \tau)\right\|_{q_{1}, \lambda} \mathrm{d} \tau,
\end{equation*}

\begin{equation*}
J_{2}=\int_{0}^{t}\left\|e^{(t-\tau) \Delta} \mathbb{P}\left(\tilde{u} \cdot \nabla c^{*}+u^{*} \cdot \nabla \overline{c}
+(|\nabla \tilde{c}|^{2}-|\nabla \overline{c}|^{2})(\tilde{c}-\underline{d}_{0})
+|\nabla \overline{c}|^{2}c^{*}\right)(\cdot, \tau)\right\|_{q_{2}, \lambda}\mathrm{d} \tau,
\end{equation*}

\begin{equation*}
J_{3}=\int_{0}^{t}\left\|e^{(t-\tau) \Delta} \mathbb{P}\left(\tilde{u} \cdot \nabla c^{*}+u^{*} \cdot \nabla \overline{c}
+(|\nabla \tilde{c}|^{2}-|\nabla \overline{c}|^{2})(\tilde{c}-\underline{d}_{0})
+|\nabla \overline{c}|^{2}c^{*}\right)(\cdot, \tau)\right\|_{L^{\infty}}\mathrm{d} \tau.
\end{equation*}

 Let $0<\delta<1$, we estimate $J_{1}$ as follows
\begin{eqnarray}
&&J_{1}
\lesssim
\left(\int_{0}^{\delta t}+\int_{\delta t}^{t}\right)(t-\tau)^{-\frac{1}{2}-\frac{N-\lambda}{2}\left(\frac{2}{q_{1}}-\frac{1}{q_{1}}\right)}\left\|\tilde{u}(\cdot, \tau) \otimes u^{*}(\cdot, \tau)+u^{*}(\cdot, \tau) \otimes \overline{u}(\cdot, \tau)\right\|_{\frac{q_{1}}{2}, \lambda}\nonumber\\
&&\quad+(t-\tau)^{-\frac{1}{2}-\frac{N-\lambda}{2}\left(\frac{2}{q_{2}}-\frac{1}{q_{1}}\right)}\left\|\nabla\tilde{c}(\cdot, \tau) \otimes \nabla c^{*}(\cdot, \tau)+ \nabla c^{*}(\cdot, \tau) \otimes \nabla\overline{c}(\cdot, \tau)\right\|_{\frac{q_{2}}{2}, \lambda}\mbox{d}\tau\nonumber\\
&&\lesssim \epsilon_{0} \int_{0}^{\delta t}(t-\tau)^{-\frac{1}{2}-\frac{N-\lambda}{2}\left(\frac{2}{q_{1}}-\frac{1}{q_{1}}\right)} \tau^{-\alpha_{1}}\left(\tau^{\frac{\alpha_{1}}{2}}\left\|u^{*}(\cdot, \tau)\right\|_{q_{1}, \lambda}\right)\nonumber\\
&&\qquad\qquad+(t-\tau)^{-\frac{1}{2}-\frac{N-\lambda}{2}\left(\frac{2}{q_{2}}-\frac{1}{q_{1}}\right)} \tau^{-\alpha_{2}}\left(\tau^{\frac{\alpha_{2}}{2}}\left\|\nabla c^{*}(\cdot, \tau)\right\|_{q_{2}, \lambda}\right)\mbox{d}\tau\nonumber\\
&&\quad+\epsilon_{0} t^{-1 / 2+\frac{N-\lambda}{2 q_{1}}}\left[\sup _{\delta t \leq \tau \leq t}\left(\tau^{\frac{\alpha_{1}}{2}}\left\|u^{*}(\cdot, \tau)\right\|_{q_{1}, \lambda}\right)
+\sup _{\delta t \leq \tau \leq t}\left(\tau^{\frac{\alpha_{2}}{2}}\left\|\nabla c^{*}(\cdot, \tau)\right\|_{q_{2}, \lambda}\right)
\right].
\end{eqnarray}

Similarly,  we  estimate  $J_{2}$ as follows:
\begin{eqnarray*}
&&J_{2}
\lesssim
\left(\int_{0}^{\delta t}+\int_{\delta t}^{t}\right)(t-\tau)^{-\frac{1}{2}-\frac{N-\lambda}{2}\left(\frac{1}{q_{1}}+\frac{1}{q_{2}}-\frac{1}{q_{2}}\right)}\left\|\left(\tilde{u} \cdot \nabla c^{*}+u^{*} \cdot \nabla \overline{c}\right)(\cdot, \tau)\right\|_{\frac{q_{1} q_{2}}{q_{1}+ q_{2}}, \lambda}\nonumber\\
&&\qquad\qquad+(t-\tau)^{-\frac{1}{2}-\frac{N-\lambda}{2}\left(\frac{2}{q_{2}}-\frac{1}{q_{2}}\right)}\left\|\left(\nabla\tilde{c} \cdot \nabla c^{*}+\nabla c^{*} \cdot \nabla \overline{c}\right)(\cdot, \tau)\right\|_{ \frac{q_{2}}{2}, \lambda}\mbox{d}\tau\nonumber\\
&&\lesssim \epsilon_{0} \int_{0}^{\delta t}(t-\tau)^{-\frac{1}{2}-\frac{N-\lambda}{2}\left(\frac{1}{q_{1}}+\frac{1}{q_{2}}-\frac{1}{q_{2}}\right)} \tau^{-\frac{\alpha_{1}+\alpha_{2}}{2}}\left(\tau^{\frac{\alpha_{1}}{2}}\left\|u^{*}\right\|_{q_{1}, \lambda}+\tau^{\frac{\alpha_{2}}{2}}\left\|\nabla c^{*}(\cdot, \tau)\right\|_{q_{2}, \lambda}\right)\nonumber\\
&&\qquad\qquad+(t-\tau)^{-\frac{1}{2}-\frac{N-\lambda}{2}\left(\frac{2}{q_{2}}-\frac{1}{q_{2}}\right)} \tau^{-\alpha_{2}}\left(\tau^{\frac{\alpha_{2}}{2}}\left\|\nabla c^{*}(\cdot, \tau)\right\|_{q_{2}, \lambda}\right) \mathrm{d} \tau\nonumber\\
&&\quad+\epsilon_{0} t^{-\frac{1}{2}+\frac{N-\lambda}{2 q_{2}}}\left[\sup _{\delta t \leq \tau \leq t}\left(\tau^{\frac{\alpha_{1}}{2}}\left\|u^{*}(\cdot, \tau)\right\|_{q_{1}, \lambda}\right)+
\sup _{\delta t \leq \tau \leq t}\left(\tau^{\frac{\alpha_{2}}{2}}\left\|\nabla c^{*}(\cdot, \tau)\right\|_{q_{2}, \lambda}\right)
\right].
\end{eqnarray*}

For $J_{3}$, it holds that
\begin{eqnarray*}
&&J_{3}
\lesssim
 \epsilon_{0} \int_{0}^{\delta t}(t-\tau)^{-\frac{1}{2}-\frac{N-\lambda}{2 q_{1}}} \tau^{-\frac{\alpha_{1}}{2}}\left(\tau^{\frac{\alpha_{1}}{2}}\left\|u^{*}(\cdot, \tau)\right\|_{q_{1}, \lambda}+\left\|c^{*}(\cdot, \tau)\right\|_{L^{\infty}}\right) \mathrm{d}\tau\nonumber\\
&&\qquad\qquad+\epsilon_{0} \int_{0}^{\delta t}(t-\tau)^{-\frac{N-\lambda}{q_{2}}} \tau^{-\alpha_{2}}\left(\tau^{\frac{\alpha_{2}}{2}}\left\|\nabla c^{*}(\cdot, \tau)\right\|_{q_{2}, \lambda}+\left\|c^{*}(\cdot, \tau)\right\|_{L^{\infty}}\right) \mathrm{d} \tau\nonumber\\
&&\qquad\qquad+\epsilon_{0}\bigg[\sup _{\delta t \leq \tau \leq t}\left(\tau^{\frac{\alpha_{1}}{2}}\left\|u^{*}(\cdot, \tau)\right\|_{q_{1}, \lambda}\right)\nonumber\\
&&\qquad\qquad+\sup _{\delta t \leq \tau \leq t}\left(\tau^{\frac{\alpha_{2}}{2}}\left\|\nabla c^{*}(\cdot, \tau)\right\|_{q_{2}, \lambda}\right)
+\sup _{\delta t \leq \tau \leq t}\left\|c^{*}(\cdot, \tau)\right\|_{L^{\infty}}\bigg].
\end{eqnarray*}

Let
\begin{eqnarray*}
&&\Omega=\epsilon_{0} \int_{0}^{\delta }(1-s)^{-\frac{1}{2}-\frac{N-\lambda}{2}\left(\frac{2}{q_{1}}-\frac{1}{q_{1}}\right)} s^{-\alpha_{1}}\left((ts)^{\frac{\alpha_{1}}{2}}\left\|u^{*}(\cdot, ts)\right\|_{q_{1}, \lambda}\right)\nonumber\\
&&\qquad\qquad+(1-s)^{-\frac{1}{2}-\frac{N-\lambda}{2}\left(\frac{2}{q_{2}}-\frac{1}{q_{1}}\right)} s^{-\alpha_{2}}\left((ts)^{\frac{\alpha_{2}}{2}}\left\|\nabla c^{*}(\cdot, ts)\right\|_{q_{2}, \lambda}\right)\mbox{d}s\nonumber\\
&&\quad+\epsilon_{0} \int_{0}^{\delta }(1-s)^{-\frac{1}{2}-\frac{N-\lambda}{2q_{1}}} s^{-\frac{\alpha_{1}+\alpha_{2}}{2}}\left((ts)^{\frac{\alpha_{1}}{2}}\left\|u^{*}(ts)\right\|_{q_{1}, \lambda}+(ts)^{\frac{\alpha_{2}}{2}}\left\|\nabla c^{*}(\cdot, ts)\right\|_{q_{2}, \lambda}\right)\nonumber\\
&&\qquad\qquad+(1-s)^{-\frac{1}{2}-\frac{N-\lambda}{2}\left(\frac{2}{q_{2}}-\frac{1}{q_{2}}\right)} s^{-\alpha_{2}}\left((ts)^{\frac{\alpha_{2}}{2}}\left\|\nabla c^{*}(\cdot, ts)\right\|_{q_{2}, \lambda}\right) \mathrm{d} s\nonumber\\
&&\quad+\epsilon_{0} \int_{0}^{\delta }(1-s)^{-\frac{1}{2}-\frac{N-\lambda}{2 q_{1}}} s^{-\frac{\alpha_{1}}{2}}\left((ts)^{\frac{\alpha_{1}}{2}}\left\|u^{*}(\cdot, ts)\right\|_{q_{1}, \lambda}+\left\|c^{*}(\cdot, ts)\right\|_{L^{\infty}}\right) \mathrm{d} s\nonumber\\
&&\quad+\epsilon_{0} \int_{0}^{\delta }(1-s)^{-\frac{N-\lambda}{q_{2}}} s^{-\alpha_{2}}\left((ts)^{\frac{\alpha_{2}}{2}}\left\|\nabla c^{*}(\cdot, ts)\right\|_{q_{2}, \lambda}+\left\|c^{*}(\cdot, ts)\right\|_{L^{\infty}}\right) \mathrm{d} s.\nonumber
\end{eqnarray*}
Then we conclude that
\begin{eqnarray}\label{409}
&&\left\|(t^{\frac{\alpha_{1}}{2}} \overline{u}-t^{\frac{\alpha_{1}}{2}} \tilde{u}, t^{\frac{\alpha_{2}}{2}} \nabla \overline{c}-t^{\frac{\alpha_{2}}{2}} \nabla \tilde{c}, \overline{c}-\tilde{c})\right\|_{M_{q_{1}, \lambda} \times M_{q_{2}, \lambda}\times L^{\infty}}\nonumber\\
&&\lesssim \left\|(t^{\frac{\alpha_{1}}{2}} e^{t \Delta} \overline{u}_{0}-t^{\frac{\alpha_{1}}{2}} e^{t \Delta} \tilde{u}_{0},
t^{\frac{\alpha_{2}}{2}} \nabla e^{t \Delta} \overline{c}_{0}-t^{\frac{\alpha_{2}}{2}} \nabla e^{t \Delta} \tilde{c}_{0},
\overline{c}_{0}-\tilde{c}_{0}) \right\|_{M_{q_{1}, \lambda} \times M_{q_{2}, \lambda}\times L^{\infty} }
\nonumber\\
&&\quad+\epsilon_{0}\bigg[\sup _{\delta t \leq \tau \leq t}\left(\tau^{\frac{\alpha_{1}}{2}}\left\|u^{*}(\cdot, \tau)\right\|_{q_{1}, \lambda}\right)
+\sup _{\delta t \leq \tau \leq t}\left(\tau^{\frac{\alpha_{2}}{2}}\left\|\nabla c^{*}(\cdot, \tau)\right\|_{q_{3}, \lambda}\right)\nonumber\\
&&\qquad\qquad
+\sup _{\delta t \leq \tau \leq t}\left\|c^{*}(\cdot, \tau)\right\|_{L^{\infty}}
\bigg]+\Omega.
\end{eqnarray}
 From (\ref{313})-(\ref{315}) and  condition (\ref{403}), we have
\begin{equation}\label{410}
h(t) \in L^{\infty}([0, \infty)), \quad \lim _{t \rightarrow \infty} h(t)=0.
\end{equation}

Let
\begin{equation*}
M=\limsup _{t \rightarrow \infty} l(t)=\lim _{k \in \mathbb{Z}, k \rightarrow \infty} \sup _{t \geq k} l(t),
\end{equation*}
then it suffices to prove $M = 0.$
 (\ref{401}) implies that $M$ is non-negative and finite.
 Hence combining  (\ref{408}) and (\ref{409}), then using the Lebesgue dominated convergence theorem and  (\ref{410}), it finds
\begin{equation}\label{411}
M \leq C_{1} \epsilon_{0}(1+F(\delta)) M,
\end{equation}
where $F(\delta)$ is defined by
\begin{eqnarray*}
&&F(\delta)=
\epsilon_{0} \int_{0}^{\delta}(1-s)^{-\frac{N-\lambda}{q_{1}}} s^{-\alpha_{1}} \mathrm{d} s
+\epsilon_{0} \int_{0}^{\delta}(1-s)^{-\frac{N-\lambda}{q_{2}}} s^{-\alpha_{2}} \mathrm{d} s\nonumber\\
&&\qquad+\epsilon_{0} \int_{0}^{\delta}(1-s)^{-\frac{N-\lambda}{2 q_{1}}-\frac{N-\lambda}{2 q_{2}}} s^{-\frac{\alpha_{1}+\alpha_{2}}{2}} \mathrm{d} s\nonumber\\
&&\qquad+\epsilon_{0} \int_{0}^{\delta }(1-s)^{-\frac{1}{2}-\frac{N-\lambda}{2}\left(\frac{2}{q_{1}}-\frac{1}{q_{1}}\right)} s^{-\alpha_{1}}
+(1-s)^{-\frac{1}{2}-\frac{N-\lambda}{2}\left(\frac{2}{q_{2}}-\frac{1}{q_{1}}\right)} s^{-\alpha_{2}}\mbox{d}s\nonumber\\
&&\qquad+\epsilon_{0} \int_{0}^{\delta }(1-s)^{-\frac{1}{2}-\frac{N-\lambda}{2q_{1}}} s^{-\frac{\alpha_{1}+\alpha_{2}}{2}}
+(1-s)^{-\frac{1}{2}-\frac{N-\lambda}{2}\left(\frac{2}{q_{2}}-\frac{1}{q_{2}}\right)} s^{-\alpha_{2}} \mathrm{d} s\nonumber\\
&&\qquad+\epsilon_{0} \int_{0}^{\delta }(1-s)^{-\frac{1}{2}-\frac{N-\lambda}{2 q_{1}}} s^{-\frac{\alpha_{1}}{2}} d s
+\epsilon_{0} \int_{0}^{\delta }(1-s)^{-\frac{N-\lambda}{q_{2}}} s^{-\alpha_{2}} \mathrm{d} s\nonumber
\end{eqnarray*}
with

\begin{equation*}
\lim _{\delta \rightarrow 0} F(\delta)=0.
\end{equation*}
Hence, choosing $\epsilon_{0}$ and $\delta$
small enough and
using $(\ref{409}),$ we deduce $M=0$.
\end{proof}

\end{document}